\documentclass[noinfoline]{imsart}

\RequirePackage[OT1]{fontenc}
\RequirePackage{amsthm,amsmath,natbib}
\RequirePackage[colorlinks,citecolor=blue,urlcolor=blue]{hyperref}

% use this package if hyperref and natbib is used:
\RequirePackage{hypernat}
\RequirePackage{multirow}
\RequirePackage{verbatim}
\RequirePackage{epsfig}
\RequirePackage{subfigure}
\RequirePackage{breakurl}
\RequirePackage{breakcites}
\RequirePackage{amssymb}

% will be filled by editor:
%\doi{10.1214/154957804100000000}
%\pubyear{0}}
%\volume{0}
%\firstpage{0}
%\lastpage{0}
%\arxiv{}

% put your definitions there:
\startlocaldefs

\newcommand{\ds} {\displaystyle}
\theoremstyle{plain}
\newtheorem{theorem}{Theorem}[section]
\newtheorem{lemma}[theorem]{Lemma}
\newtheorem{proposition}[theorem]{Proposition}
\newtheorem{conjecture}[theorem]{Conjecture}
\newtheorem{corollary}[theorem]{Corollary}
\newtheorem{problem}[theorem]{Problem}
\newenvironment{definition}[1][Definition]{\begin{trivlist}
\item[\hskip \labelsep {\bfseries #1}]}{\end{trivlist}}
    \newcounter{example}
    \newenvironment{example}[1][]{\refstepcounter{example}\par\medskip\noindent%
       \textbf{Example~\theexample. #1} \rmfamily}{\medskip}
\newtheorem{remark}{Remark}[section]  %% remove [section] to change numbering format!!!

\newcommand{\beq}{\begin{equation}}
\newcommand{\eeq}{\end{equation}}
\newcommand{\ben}{\begin{eqnarray}}
\newcommand{\een}{\end{eqnarray}}
\newcommand{\beno}{\begin{eqnarray*}}
\newcommand{\eeno}{\end{eqnarray*}}

\endlocaldefs
\bibliographystyle{imsart-nameyear}

\begin{document}
\begin{frontmatter}

\title{Fibers of  multi-way contingency tables given conditionals: relation to marginals, cell bounds and Markov bases}
\runtitle{Conditional and Marginal Sample Spaces}

%Fibers of  multi-way contingency tables given conditionals: structure, relation to marginals, cell bounds and Markov bases}
% indicate corresponding author with \corref{}
% \author{\fnms{John} \snm{Smith}\thanksref{t2}\corref{}\ead[label=e1]{smith@foo.com}\ead[label=e2,url]{www.foo.com}}
% \thankstext{t2}{Thanks to somebody}
% \address{line 1\\ line 2\\ \printead{e1}\\ \printead{e2}}

\author{\fnms{Aleksandra B.} \snm{Slavkovi{\'c}}\thanksref{t2}\corref{}\ead[label=e1]{sesa@stat.psu.edu}}
\address{\printead{e1}}
\and
\author{\fnms{Xiaotian} \snm{Zhu}\thanksref{t2}\ead[label=e2]{xxz131@psu.edu}}
\address{\printead{e2}}
\and
\author{\fnms{Sonja} \snm{Petrovi{\'c}}\thanksref{t3}\ead[label=e3]{Sonja.Petrovic@iit.edu}}
\address{\printead{e3}}
\runauthor{Slavkovi{\'c}}

\runauthor{Slavkovi{\'c} et al.}

\thankstext{t2}{Supported in part by NSF grants SES-052407 and BCS-0941553 to the Pennsylvania State University.}
\thankstext{t3}{Supported in part by grant FA9550-12-1-0392 from the U.S. Air Force Office of Scientific Research (AFOSR) and the Defense Advanced Research Projects Agency (DARPA)}

\begin{abstract}
A reference set, or a fiber, of a contingency table is the space of all realizations of the table under a given set of constraints such as marginal totals. Understanding the geometry of this space is a key problem in algebraic statistics, important for conducting exact conditional inference, calculating cell bounds, imputing missing cell values, and assessing the risk of disclosure of sensitive information. 

Motivated primarily by disclosure limitation problems where constraints can come from summary statistics other than the margins, in this paper 
we study the space $\mathcal{F_T}$ of all possible multi-way contingency tables for a given sample size and set of observed conditional frequencies. We show that this space can be decomposed according to different possible marginals, which, in turn, are encoded by the solution set of a linear Diophantine equation. 
We characterize the difference between two fibers: $\mathcal{F_T}$ and 
the space of tables for a given set of corresponding marginal totals. In particular, we solve a generalization of an open problem posed by \cite{dobra2008asa}. 
Our decomposition of $\mathcal{F_T}$ has two important consequences: (1) we derive new cell bounds, some including connections to Directed Acyclic Graphs, and (2) we describe a structure for the Markov bases for the space $\mathcal{F_T}$ that leads to a simplified calculation of Markov bases in this particular setting. \\
\end{abstract}

\begin{keyword}[class=AMS]
\kwd{13P10, 62B05, 62H17, 62P25}
\end{keyword}

\begin{keyword}
\kwd{Conditional tables}
\kwd{Contingency tables}
\kwd{Diophantine equations}
\kwd{Disclosure limitation}
\kwd{Directed Acyclic Graphs}
\kwd{Marginal tables}
\kwd{Markov bases}
\kwd{Optimization for cell entries}
\end{keyword}

\end{frontmatter}

\section{Introduction}
\label{sec:introduction}
\label{sec:intro}

In \cite{dobra2008asa}, the authors use tools from algebraic statistics to study two related problems: maximum likelihood estimation for log-linear models in multi-way contingency tables, and disclosure limitation strategies to protect against the identification of individuals associated with small counts in the tables; for an overview of disclosure limitation literature see \cite{doyle-confidentiality} and \cite{ hundepool2012sdc}. These are linked to the general problem of inference in tables for which only partial information is available (e.g., see \cite{dobra2006dam}, \cite{yves2003}, and \cite{marjoram2003markov}).

Incomplete data commonly arise in surveys or census data which have been modified to limit disclosure of sensitive information. 
Instead of releasing complete data, summary statistics are often released, even if they may not be the sufficient statistics for the probability model. Examples of summary statistics are marginal tables, or tables of conditional frequencies, e.g., \cite{slavkovic09}.   Given a set of released statistics, there are a number of ways to assess the disclosure risk and data utility, 
including  computing bounds for cell entries,
 enumerating all table realizations, 
and  sampling from a fiber to estimate posterior distributions. A \emph{fiber} is the space of all possible tables consistent with the observed statistics. Since the fibers form the support of the conditional distributions given a set of summary statistics, their properties are important for conducting exact conditional inference; e.g., see \cite{diaconis1998}  for an algebraic statistics approach to goodness-of-fit testing given the marginal totals, and \cite{dobra:fien:2009}) for calculating bounds on the cell entries.
Similar techniques that rely on understanding fibers' structure can be used to impute missing data in contingency tables and to create replacement tables; see \cite{slav:lee:2009}, with focus on tables that arise from preserving conditional frequencies. 
%If we consider the original data as missing, then these techniques can be used to impute missing data in contingency tables and to create replacement tables; see \cite{slav:lee:2009}, with focus on tables that arise from preserving conditional frequencies. 
%In \cite{dobra2008asa}, the authors  focus on the problem of maximum likelihood estimation for log-linear models in $k$-way contingency tables and a related problem of disclosure limitation strategies to protect against the identification of individuals associated with small counts in the tables. 
 %The problem is linked to a general problem of inference in tables for which only partial information is available (e.g., see \cite{dobra2006dam}, \cite{yves2003}, and \cite{marjoram2003markov}). 
In this paper, we study the sample space of contingency tables given observed conditional frequencies and their relations to corresponding marginals. More specifically, we address the following challenge:

\begin{problem}[Problem 5.7 in \cite{dobra2008asa}] 
\label{mainProb}
Characterize the difference of two fibers, one for a conditional probability array, and the other for the corresponding margin, and thus simplify the calculation of Markov bases for the conditionals by using the knowledge of the moves of the corresponding margins.
\end{problem}

\smallskip
%Before summarizing the main contributions of this manuscript, let us fix the notation we will use throughout. 
Here is a general setup. Consider $r$ categorical random variables, $X_1, \ldots, X_r$, where each $X_i$ takes values in the finite set of categories $[d_i] \equiv \{ 1, \ldots, d_i \}$. Let  $\mathcal{D} = \bigotimes_{i=1}^r [d_i]$, and $\mathbb{R}^\mathcal{D}$ be the vector space of $r$-dimensional arrays of format $d_1 \times \ldots \times d_r$, with a total of $d = \prod_i d_i$ entries.
The cross-classification of $n$ independent and identically distributed realizations of  $(X_1, \ldots, X_r)$ produces a random integer-valued array ${\bf n} \in \mathbb{R}^\mathcal{D}$, called a $r$-way \textit{contingency table}, whose coordinate entry
$
n_{i_i, \ldots, i_r}
$
is the number of times the label combination, or \textit{cell}, $(i_1, \ldots, i_r)$ is observed in the sample (see \cite{agresti2002, bishop2007discrete,Lauritzen1996} for details). It is often convenient to order the cells in some prespecified way (e.g., lexicographically). 
%%%%%I CHANGED k to r because below we are abusing the notation; below k means something else %%%%

Let $A$ and $B$ be proper subsets of $\{X_1, X_2, ... , X_r\}$, and $C=\{X_1, X_2, ... , X_r\}\setminus (A\cup B)$. 
We can regard $A, B$ and $C$ as three categorical variables with levels $A_1, ..., A_I, B_1, ...., B_J, $ and $C_1, ..., C_K.$ Thus, we can summarize the $r$-way table $\bf{n}$ as a $3$-way table $\bf{n^*}:=\{s_{ijk}\}$, where $s_{ijk}$ is the count in the cell  $(A_i, B_j, C_k)$.  Finally, let $c_{ij}$ be the observed conditional frequency $P(A=i|B=j),$ such that $\sum_i P(A=i|B=j)=1$. 
 If $C$ is an empty set, we refer to $c_{ij}$'s as {\em full} conditionals, otherwise as {\em small} or {\em partial} conditionals.

%\smallskip
Motivated by Problem~\ref{mainProb}, we investigate the fiber  $\mathcal{F_T}$ for $\mathcal{T}=\{P(A|B), N\}$, that is the 
space of all possible tables consistent with:
\begin{itemize}
    \item[(a)] the observed grand total, $\sum\limits_{i_1...i_r}{n_{i_1i_2...i_r}}=N$, and
    \item[(b)] a set of observed conditional frequencies, $P(A|B)$. 
\end{itemize}
 Note that we do not observe the values of $B$, and we assume that all of the given frequencies are exact. % and we do not observe the values of $B$. 
Then,  the space $\mathcal{F_T}$ is the set of integer  solutions to the following system of linear equations

\beq\label{eq:spaceoftables} \left\{ \begin{array}{l}
 M\bf{n} = t \\
 \text{every B marginal} > 0 \\
 \end{array} \right\},
\eeq where  $\bf{n}$ and $\bf{t}$ are length $d$ column vectors, and $M$ is a $(J+1)\times d$ matrix that, together with $\bf{t}$, describes the information encoded by the grand
total and the given frequencies. When $N$ is clear from the context, we use the shorthand notation $\mathcal{F}_{A|B}$ to denote $\mathcal{F}_{\{P(A|B),N
\}}$.
The space of tables given the  $[AB]$ marginal counts $s_{ij+}$ is denoted by $\mathcal F_{AB}.$ For a concrete example, see Section~\ref{subsec:mp3}

%\smallskip
The main contributions of this manuscript come from the structural results for the fibers defined above.  In particular, we solve a generalization of an open problem posed by \cite{dobra2008asa}. In Corollary \ref{cor:equalspace} we give conditions for when the two fibers $\mathcal{F}_{A|B}$ and $\mathcal{F}_{AB}$ agree.  A decomposition of the table space $\mathcal{F_{A|B}}$ is given in Corollary \ref{cor:tableSpaceDecompositionByMarginals}, showing that the space of tables given the conditional is a disjoint union of spaces of tables given distinct marginals. This decomposition of $\mathcal{F_T}$ leads to three important applied results: (1) in Section~\ref{subsec:countingTables}, we derive new results on computing the exact and approximate cardinality of the given fibers and provide functions to do this in R, (2) in Section~\ref{subsec:bounds},  we derive new cell bounds, some including connections to Directed Acyclic Graphs in Section~\ref{subsec:dag}, and (3) in Section~\ref{subsec:MB}, we describe a structure for the Markov bases for the space $\mathcal{F_T}$ that leads to a simplified calculation of Markov bases in this setting. In Section~\ref{sec:examples}, we demonstrate our theoretical results with a series of simple examples and conclude with a brief discussion in Section~\ref{sec:conclusions}.

%%%%%%% MAIN STUFF %%%%%%%%%%%%%%%%%%%
\section{The Space of Tables with Given Conditional Frequencies}
\label{sec:mathOfTableSpace}

Data examples suggest a connection between the solutions to a Diophantine equation defined below in equation (\ref{eq:diophantine}), and the
space of tables $\mathcal{F}_{A|B}$ that we are interested in. 
%\textcolor{red}{E, Sesa: ja mislim da bi ovde bilo dobro da dodamo reference:  Zasto mi je palo na pamet: pa nismo otkrili Ameriku -- vec ljudi znaju da su lattice points in polytopes in correspondence with solutions of Diophantine equations.  Evo dodala sam recenicu i jednu phrase posle: } 
Moreover, this connection appears in symbolic computation: points in the fiber are lattice points in polytopes, and their connection to Diophantine equations has a history in mathematics  \cite{DeLoeraHemmeckeYoshida}.
In what follows, we establish this connection more rigorously from the point of view of marginal and conditional tables.  Finding solutions to Diophantine equations is a well-studied classical
problem in mathematics, one that is generally hard to solve and with a number of proposed algorithms; e.g., see \cite{MoritoSalkin:1980, ChenLi, EisTeWi, Smarandache2000}, and references therein. 
But the equation (\ref{eq:diophantine}) here is simple enough that can be analyzed using classical algebra, and as such affords implementations of simple functions in R needed for statistical analyses. Throughout, we use the notation  established in Section~\ref{sec:intro}. 

\subsection{Table space decomposition}%The space of tables and a linear Diophantine equation}
\label{subsec:tableSpaceAndDioph}

%We begin by using the solution set of a linear Diophantine equation to identify marginals. Throughout, we use the notation  established in Section~\ref{sec:intro}. 
The table of observed conditional frequencies gives rise to a linear Diophantine equation  (\ref{eq:diophantine})  whose solutions  correspond to  possible marginals $B$ that we condition on in $P(A|B).$ Once we know the corresponding marginals $AB,$ we can decompose the table space $\mathcal{F_{A|B}}$ accordingly. 
%We begin by identifying possible marginals with solutions of a particular equation which can be directly obtained from the observed conditional frequencies. 
%Throughout, we use the notation  established in Section~\ref{sec:intro}. 

	The observed conditional frequencies $c_{ij}$ can be used to recover marginal values $s_{+j+}$ in the following way. 
\begin{theorem}
    \label{thm:diophantineSolutions=Marginals}
%%%rephrased for clarity: 
     Suppose $c_{ij}=\ds\frac{g_{ij}}{h_{ij}}$ for nonnegative and relatively prime integers  $g_{ij}$ and $h_{ij}.$
    Let $m_j$ be the least common multiple of all $h_{ij}$ for fixed $j$. %, and let $J=|B|$, the number    of values that $B$ takes.
    Then, each positive integer solution $\{x_j\}_{j=1}^{J}$ of
    \begin{align}\label{eq:diophantine}
        \sum\limits_{j=1}^{J}m_{j}\cdot x_{j}=N
    \end{align}
    corresponds to a marginal $s_{+j+}$, up to a scalar multiple.  In particular, a table $\bf{n}$ consistent with the given information $\{c_{ij},N\}$ exists
    if and only if Equation (\ref{eq:diophantine}) has a nonnegative integer solution. \end{theorem}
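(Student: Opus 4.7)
The plan is to analyze the integrality constraints that $c_{ij}=s_{ij+}/s_{+j+}$ imposes on each marginal value $s_{+j+}$. Fix $j$. Since $c_{ij}=g_{ij}/h_{ij}$ with $\gcd(g_{ij},h_{ij})=1$, the equality $s_{ij+}=c_{ij}\,s_{+j+}$ together with integrality of $s_{ij+}$ forces $h_{ij}\mid s_{+j+}$. Because this must hold simultaneously for every $i=1,\dots,I$, the least common multiple $m_j=\operatorname{lcm}_i h_{ij}$ must divide $s_{+j+}$, so we may write $s_{+j+}=m_j\,x_j$ for some nonnegative integer $x_j$. Positivity of the $B$-marginal, imposed by the fiber definition in~(\ref{eq:spaceoftables}), forces $x_j\ge 1$. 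Summing over $j$ and using $\sum_j s_{+j+}=N$ yields the Diophantine equation~(\ref{eq:diophantine}) exactly. This shows that any table in $\mathcal{F}_{A|B}$ gives rise to a positive integer solution $\{x_j\}$ of~(\ref{eq:diophantine}), and the $j$th marginal is recovered as $s_{+j+}=m_j x_j$, establishing the forward direction.

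For the converse, I would start from a nonnegative integer solution $\{x_j\}$ of~(\ref{eq:diophantine}) and construct a table explicitly. Define the two-way marginal counts $s_{ij+}:=g_{ij}(m_j/h_{ij})\,x_j$; the divisibility $h_{ij}\mid m_j$ ensures $s_{ij+}$ is a nonnegative integer, and summing over $i$ gives $\sum_i s_{ij+}=m_j x_j\sum_i c_{ij}=m_j x_j$ because the conditional probabilities sum to one in the $A$-coordinate. Summing over $j$ then recovers $N$. To obtain a full $r$-way array $\bf{n}$ one need only distribute these $[AB]$-marginal counts arbitrarily across the $C$-coordinate (for instance, place all mass in $k=1$ and set the other layers to zero), producing a valid element of $\mathcal{F}_{A|B}$ with the prescribed conditional frequencies and grand total. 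Combined with the forward direction, this yields the stated existence equivalence.

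I do not anticipate a serious technical obstacle; the argument is essentially a coprimality-plus-lcm calculation. The only real subtlety is bookkeeping around the phrase ``up to a scalar multiple'' in the statement: each coordinate $x_j$ recovers the $j$th $B$-marginal through the fixed rescaling factor $m_j$, so the correspondence is between solution vectors $\{x_j\}$ of~(\ref{eq:diophantine}) and admissible marginal vectors $\{s_{+j+}\}$, not between individual entries. I would also take care to reconcile the mild tension between ``positive'' solutions (corresponding to tables actually in $\mathcal{F}_{A|B}$, since the fiber enforces strictly positive $B$-marginals) and ``nonnegative'' solutions (sufficient for the mere existence of a consistent table if one permits empty $B$-levels), since both phrasings appear in the statement.
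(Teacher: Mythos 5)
Your proposal is correct and follows essentially the same route as the paper's proof: coprimality forces $h_{ij}\mid s_{+j+}$ for every $i$, hence $m_j\mid s_{+j+}$, giving the Diophantine equation; and conversely one sets $s_{ij+}=m_jx_jc_{ij}$ and distributes over the $C$-levels. Your version is in fact slightly more careful than the paper's, since you explicitly verify integrality of $s_{ij+}$ via $h_{ij}\mid m_j$ and flag the positive-versus-nonnegative distinction that the paper's statement leaves implicit.
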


\begin{remark} \rm
    If we allow the solutions to be only integers, then an equation of the form (\ref{eq:diophantine}) is called
    a \emph{linear Diophantine equation}.
\end{remark}

The proof of the above Theorem can be found in Appendix A (Section~\ref{sec:app-proofs}).
Since each solution of the Diophantine equation corresponds to a marginal we condition on, we easily obtain the following consequence:

\begin{corollary}\label{cor:equalspace}
	The following statements are equivalent:
    \begin{itemize}
        \item[(a)] $\mathcal F_{A|B}$ coincides with $\mathcal F_{AB}$.
        \item[(b)] Equation  (\ref{eq:diophantine}) has only one positive integer solution.
    \end{itemize}
\end{corollary}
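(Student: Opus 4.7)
The plan is to reduce the corollary directly to Theorem~\ref{thm:diophantineSolutions=Marginals}, which already establishes a bijection between positive integer solutions $\{x_j\}$ of the Diophantine equation and the possible $B$-marginals $s_{+j+}$ (and hence, via $s_{ij+}=c_{ij}\cdot s_{+j+}$, the possible $[AB]$-marginals) consistent with $\{c_{ij},N\}$. The proof has two directions, both more-or-less immediate once this correspondence is in hand.

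First I would set up the decomposition underlying both implications. Every table $\mathbf{n}\in\mathcal{F}_{A|B}$ has a well-defined $B$-marginal, and that marginal must be of the form $s_{+j+}=\lambda m_j x_j$ for some positive integer solution $\{x_j\}$ of (\ref{eq:diophantine}) (here $\lambda$ is absorbed by the constraint $\sum s_{+j+}=N$, forcing a unique scaling for each solution). Consequently,
\begin{equation*}
\mathcal{F}_{A|B}\;=\;\bigsqcup_{\{x_j\}\text{ solving }(\ref{eq:diophantine})}\mathcal{F}_{AB}^{(x)},
\end{equation*}
where $\mathcal{F}_{AB}^{(x)}$ denotes the fiber of tables whose $[AB]$-marginal is the one determined by $x$. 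The union is disjoint because distinct solutions yield distinct marginals.

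For (b) $\Rightarrow$ (a): if there is a single positive integer solution, the decomposition above collapses to a single piece, so $\mathcal{F}_{A|B}=\mathcal{F}_{AB}^{(x)}$ and this is exactly the $\mathcal{F}_{AB}$ referred to in the statement. For (a) $\Rightarrow$ (b): I would argue the contrapositive. If (\ref{eq:diophantine}) has two distinct positive solutions, then $\mathcal{F}_{A|B}$ contains tables with two different $[AB]$-marginals. Since any fiber $\mathcal{F}_{AB}$ is by definition supported on a single marginal, no choice of $\mathcal{F}_{AB}$ can equal $\mathcal{F}_{A|B}$; in particular the two fibers cannot coincide.

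The only subtle point, and the part worth stating explicitly in the write-up, is the interpretive one in (a) $\Rightarrow$ (b): the corollary implicitly assumes $\mathcal{F}_{AB}$ refers to the fiber of some (any) single $[AB]$-marginal compatible with the conditionals, so the argument really shows that $\mathcal{F}_{A|B}$ cannot equal \emph{any} such $\mathcal{F}_{AB}$ when multiple solutions exist. Everything else is a direct application of Theorem~\ref{thm:diophantineSolutions=Marginals} together with the disjoint-union decomposition, which is also what Corollary~\ref{cor:tableSpaceDecompositionByMarginals} formalizes.
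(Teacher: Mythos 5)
Your proof is correct and follows essentially the same route as the paper, which derives Corollary~\ref{cor:equalspace} directly from the solution--marginal correspondence of Theorem~\ref{thm:diophantineSolutions=Marginals} (and states the disjoint-union decomposition you use as Corollary~\ref{cor:tableSpaceDecompositionByMarginals} immediately afterwards). The only implicit ingredient worth keeping in mind is that each piece $\mathcal{F}_{AB}^{(x)}$ is nonempty, which is guaranteed by the constructive converse direction in the proof of Theorem~\ref{thm:diophantineSolutions=Marginals}.
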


Note that the tables in these fibers form the support of the conditional distributions given some summary statistics. In the case of margins, there has been much work on conditional exact inference given the marginals as sufficient statistics. Also note that a marginal determines the exact (integer) cell bounds of $\bf{n}$:
the cell bound for $n_{i_1i_2...i_r}$ 
is $[0,s_{+j+}\cdot c_{ij}]$, and a  different marginal $\{s_{+j+}\} $ leads to a different cell
bound. When Corollary~\ref{cor:equalspace} holds, there is only one $AB$ margin.  Thus, the support of conditional distribution given $\{A|B, N\}$ is the same as the support given $AB$ and the integer cell bounds are the same, i.e., $0\leq s_{ijk} \leq s_{ij+}$, that is, $0\leq n_{i_1, ..., i_r} \leq n_{ab}$ in the corresponding $r$-way table.

Let us single out another very important consequence of Theorem \ref{thm:diophantineSolutions=Marginals}, which we will refer to as
the table-space decomposition result:
\begin{corollary} [Table-Space $\mathcal{F_{A|B}}$ Decomposition] 
    \label{cor:tableSpaceDecompositionByMarginals}
Suppose that the Diophantine equation (\ref{eq:diophantine}) has $m$ solutions.
    Denote by $\mathfrak p_i$ the marginal corresponding to the $i^{th}$ solution, and by $\mathcal F_{AB}(\mathfrak p_i)$  the    space of tables given that particular marginal table. Then, we have the following decomposition of the table space, taken as a disjoint union:
    \begin{align*}
            \mathcal F_{A|B} = \bigcup_{i=1}^{m} \mathcal F_{AB}(\mathfrak p_i).
    \end{align*}
  
\end{corollary}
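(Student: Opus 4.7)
The plan is to prove the decomposition by verifying the set-theoretic equality in both directions and then checking disjointness, leaning on Theorem~\ref{thm:diophantineSolutions=Marginals} to pass between tables, their $B$-marginals, and solutions of the Diophantine equation~(\ref{eq:diophantine}).

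First I would establish the inclusion $\bigcup_{i=1}^{m}\mathcal{F}_{AB}(\mathfrak{p}_i)\subseteq \mathcal{F}_{A|B}$. Take any table $\mathbf{n}\in \mathcal{F}_{AB}(\mathfrak{p}_i)$. By definition it has $B$-marginal equal to $\mathfrak{p}_i$, which by Theorem~\ref{thm:diophantineSolutions=Marginals} arises from a positive integer solution of~(\ref{eq:diophantine}); in particular $\mathfrak{p}_i$ sums to $N$, so $\mathbf{n}$ has grand total $N$. Moreover, because $\mathfrak{p}_i$ is the marginal associated to that solution, the conditional frequencies $s_{ij+}/s_{+j+}$ computed from $\mathbf{n}$ coincide with the prescribed $c_{ij}$. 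Hence $\mathbf{n}$ satisfies~(\ref{eq:spaceoftables}) and lies in $\mathcal{F}_{A|B}$.

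Next I would prove the reverse inclusion $\mathcal{F}_{A|B}\subseteq \bigcup_{i=1}^{m}\mathcal{F}_{AB}(\mathfrak{p}_i)$. For any $\mathbf{n}\in\mathcal{F}_{A|B}$, the $B$-marginal $(s_{+j+})_{j=1}^{J}$ is a well-defined vector of positive integers summing to $N$ (positivity from the constraint in~(\ref{eq:spaceoftables})). The argument used in the proof of Theorem~\ref{thm:diophantineSolutions=Marginals} shows that such a marginal, consistent with the observed conditionals $c_{ij}=g_{ij}/h_{ij}$, forces $m_j \mid s_{+j+}$ for every $j$, so writing $s_{+j+}=m_j x_j$ yields a positive integer solution of~(\ref{eq:diophantine}). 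By hypothesis this solution is one of the $m$ enumerated solutions, and hence the marginal is $\mathfrak{p}_i$ for some $i$, placing $\mathbf{n}$ in $\mathcal{F}_{AB}(\mathfrak{p}_i)$.

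Finally, disjointness: if $\mathbf{n}\in \mathcal{F}_{AB}(\mathfrak{p}_i)\cap \mathcal{F}_{AB}(\mathfrak{p}_j)$, then $\mathbf{n}$ would have $B$-marginal equal to both $\mathfrak{p}_i$ and $\mathfrak{p}_j$, forcing $\mathfrak{p}_i=\mathfrak{p}_j$; by Theorem~\ref{thm:diophantineSolutions=Marginals} distinct solutions of~(\ref{eq:diophantine}) correspond to distinct marginals, so $i=j$. I do not anticipate a genuine obstacle here; the content of the corollary is essentially a book-keeping consequence of the bijection between $B$-marginals compatible with $\{c_{ij},N\}$ and positive integer solutions of~(\ref{eq:diophantine}) established in Theorem~\ref{thm:diophantineSolutions=Marginals}. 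The only subtle point is to make sure the positivity condition ``every $B$-marginal $>0$'' in~(\ref{eq:spaceoftables}) aligns with the positivity of the solution in the theorem, which is precisely why the theorem was stated for positive (not merely nonnegative) integer solutions.
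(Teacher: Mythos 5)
Your proof is correct and follows exactly the route the paper intends: the paper gives no separate proof of this corollary, presenting it as an immediate consequence of Theorem~\ref{thm:diophantineSolutions=Marginals} and of the correspondence $s_{+j+}=m_jx_j$, $s_{ij+}=m_jx_jc_{ij}$, which is precisely the bijection your two inclusions and disjointness check make explicit. Your only (harmless) imprecision is referring to $\mathfrak{p}_i$ as a $B$-marginal when the paper defines it as the $[AB]$-marginal determined by the solution, but your argument correctly passes through both.
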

To conclude this section, note that the proof of Theorem \ref{thm:diophantineSolutions=Marginals} shows that each solution $(x_1,\dots,x_J)$ to the Diophantine equation (\ref{eq:diophantine}) corresponds to a marginal in the following way: $s_{+j+}=m_jx_j$ for $1\leq j\leq J$; thus, $s_{ij+}=m_jx_jc_{ij}$. We will use this fact often.

\subsection{The space of tables and integer points in polyhedra}
\label{subsec:tableSpaceAndIntegerPts}

An important question arises next: How many marginals can there be for a given conditional table? This question can be answered using a straightforward count of lattice points in a polyhedron. %(Section~\ref{subsec:tableSpaceAndIntegerPts}).
%\begin{remark} \rm
Counting lattice points in polyhedra and counting the solutions to a Diophantine equation (e.g.,  \cite{Sertoz1998} and \cite{ChenLi}\footnote{We note that our Diophantine equation does not necessarily satisfy the main hypothesis of the main result from \cite{ChenLi}.}) are interesting mathematical problems with a rich history.  In particular,  there exist polynomial time algorithms for counting the number of lattice points in polyhedra; e.g., see \cite{Barvinok} and  \cite{Lasserre}.  Due to the simpler geometry of our problem, we do not need to use the general algorithms, and, therefore, we derive simpler solutions. %We note that our Diophantine equation does not necessarily satisfy the conditions of the main result from \cite{ChenLi}.
%Also, we note that our Diophantine equation does not necessarily satisfy the main hypothesis of the main result from \cite{ChenLi}.
%\end{remark}

 We explain the  correspondence between solutions of Equation (\ref{eq:diophantine}) and nonnegative lattice points $\mathfrak p_i$.

\begin{lemma}
\label{lm:structureOfSolnsOfDioph}
    Suppose that the Diophantine equation (\ref{eq:diophantine}) has a    solution ${\bf x}_0$.
    Then there exist vectors ${\bf v}_1,\dots,{\bf v_{J-1}}\in\mathbb Z^J$ such that \emph{any}
    solution ${\bf x}=(x_1,x_2,...,x_J)$ of (\ref{eq:diophantine}) is given as their integral linear combination:
    $${\bf x}={\bf x}_0+\sum\limits_{i=1}^{J-1}{q_i\cdot {\bf v}_{i}}.$$
    Note that we require that each $q_i\in\mathbb Z$, and that ${\bf v}_1,\dots,{\bf v_{J-1}}$ can be computed from the Diophantine coefficients $m_j$. 
\end{lemma}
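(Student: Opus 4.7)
The plan is to reduce the statement to a description of the integer kernel of the $1\times J$ matrix $M=[m_1,\dots,m_J]$ and then invoke the standard structure theorem for submodules of $\mathbb Z^J$.

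First I would subtract: if $\mathbf x_0$ is any fixed solution of $\sum_j m_j x_j = N$ and $\mathbf x$ is another integer solution, then $\mathbf y := \mathbf x - \mathbf x_0 \in \mathbb Z^J$ satisfies the homogeneous equation
\[
   m_1 y_1 + m_2 y_2 + \cdots + m_J y_J = 0 .
\]
So the set of all integer solutions of (\ref{eq:diophantine}) is the coset $\mathbf x_0 + \ker_{\mathbb Z}(M)$, where $\ker_{\mathbb Z}(M) = \{\mathbf y\in\mathbb Z^J : M\mathbf y = 0\}$. The lemma is therefore equivalent to showing that $\ker_{\mathbb Z}(M)$ is a free $\mathbb Z$-module of rank $J-1$ and exhibiting an integral basis that depends only on the $m_j$.

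Next I would establish freeness and rank. Since not all $m_j$ are zero (they are positive least common multiples), $M$ has rank $1$ as a linear map $\mathbb Z^J \to \mathbb Z$, so its kernel has rank $J-1$ as an abelian group and is torsion-free (being a subgroup of $\mathbb Z^J$); hence it is free of rank $J-1$. The cleanest way to produce an explicit basis is through the Smith normal form of the $1\times J$ integer matrix $M$: there exists a unimodular matrix $V\in\mathrm{GL}_J(\mathbb Z)$ such that $MV = [d,0,\dots,0]$, where $d=\gcd(m_1,\dots,m_J)$. Taking $\mathbf v_i := V e_{i+1}$ for $i=1,\dots,J-1$ then gives $J-1$ integer vectors satisfying $M\mathbf v_i = 0$, and since $V$ is unimodular these vectors form a $\mathbb Z$-basis of $\ker_{\mathbb Z}(M)$. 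The columns of $V$ depend only on the $m_j$, which verifies the final assertion of the lemma.

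Combining the two steps, every integer solution $\mathbf x$ can be written uniquely as
\[
   \mathbf x = \mathbf x_0 + \sum_{i=1}^{J-1} q_i\, \mathbf v_i, \qquad q_i \in \mathbb Z ,
\]
which is exactly the claim. The only potentially subtle point is the freeness of $\ker_{\mathbb Z}(M)$ together with the integrality (as opposed to merely rationality) of a basis; this is precisely what Smith normal form buys us. If one prefers a more elementary route avoiding Smith normal form, the same basis can be produced inductively by repeatedly applying Bezout's identity to pairs $(m_j,\gcd(m_{j+1},\dots,m_J))$, yielding vectors of the form $(0,\dots,0,\ast,\ast,\dots,\ast)$ whose leading nonzero coordinates occur in distinct positions, so that they are automatically $\mathbb Z$-linearly independent and span $\ker_{\mathbb Z}(M)$ by a rank count. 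Either route is routine; the substantive content of the lemma is just the translation of the affine Diophantine problem into a lattice-theoretic one.
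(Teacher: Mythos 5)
Your proof is correct. It differs from the paper's in packaging: the paper proceeds entirely by the elementary route you mention only as an alternative at the end. It iterates the Euclidean algorithm over the prefixes $(m_1,\dots,m_{J-j})$ to get Bezout coefficients $x_i^{(j)}$ with $\sum_{i=1}^{J-j} m_i x_i^{(j)}=(m_1,\dots,m_{J-j})$, and then writes down a fully explicit closed form for every integer solution, from which the coordinates of each ${\bf v}_i$ are read off as ratios of the $m_j$'s and successive gcd's. Your primary argument instead reduces to the coset $\mathbf x_0+\ker_{\mathbb Z}(M)$, notes that this kernel is free of rank $J-1$, and extracts an integral basis from the Smith normal form $MV=[d,0,\dots,0]$. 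Both are valid, and your unimodularity argument is the cleaner way to see that the basis is genuinely integral and spanning over $\mathbb Z$ (a point the paper leaves somewhat implicit). What the paper's explicit construction buys, and what your Smith-normal-form existence argument does not by itself provide, is the concrete matrix of basis vectors that is reused verbatim in the proof of Proposition~\ref{prop:approxNumOfSolnsDiophantine-viaAlgebra}, where the determinant of $A=[u,v_1,\dots,v_{J-1}]$ is computed to obtain the volume of the fundamental parallelotope; if you wanted your proof to support that later computation, you would need to carry out your inductive Bezout variant explicitly rather than appeal to the abstract structure theorem.
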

The proof of this result uses elementary algebra (and some number theory).  For reader's convenience, it is included in Appendix \ref{sec:proof-of-thm:structure}. %\ref{sec:app-proofs}.
For additional details and a low-dimensional example illustrating this lemma, see Appendix \ref{sec:app-code}. 

That the set of all solutions to equation (\ref{eq:diophantine}) is a $(J-1)$-dimensional lattice  is a special case of a classical result that identifies the solution set of any system of linear
 Diophantine equations with a lattice \cite{FelixLazebnik}. 
As a subset of that lattice, the set of \emph{nonnegative} solutions can be expressed as a linear combination of the elements in some basis of the lattice. 
In the proof of Lemma~\ref{lm:structureOfSolnsOfDioph}, we give one such combination.
We use this construction to write a {\em solvequick()} function in R (see Appendix \ref{sec:app-code}) for quickly finding a solution to (\ref{eq:diophantine}), and demonstrate its use in Section~\ref{sec:examples}. When there is more than one solution, we provide a quick way to count the tables via a {\em tablecount()} function as explained next.
%Section~\ref{sec:examples} includes examples of how these functions can be used, and the relevant code is available at \url{http://www.stat.psu.edu/~sesa/cctable}.

%%%%%%%%%%%%%%%%%%%%%%%%%%%%%%%%%%%%%%%%%%%%%%%%%%%%%%%%%%%%
\subsection{Size of table space: exact and approximate}%%%%%%%%%%%%%%%%%%%%%%%%%%%%%%
    \label{subsec:countingTables}

First we derive the exact count formula for the total number of integer-valued $r$-way tables ${\bf n}$
given the marginal $[AB]$.  In Corollary \ref{cor:exactTotalTableCount}, this count is combined with the table-space decomposition results from Corollary~\ref{cor:tableSpaceDecompositionByMarginals} to derive the number of $r$-way tables in the fiber $\mathcal F_{A|B}$.

Consider a $r$-way table as a $3$-way table of counts $s_{ijk}$ for $A$, $B$, and $C$ taking $I, J,$ and $K$ states, respectively.
Suppose we marginalize $C$.
One can derive a simple formula for the number of $3$-way tables, and, therefore, corresponding $r$-way tables, all having the same margin $[AB]$.

\begin{lemma}[Exact count of data tables given one marginal]
    \label{lm:tableCountFormula}
    Adopting the above notation,  the number of $r$-way tables (data tables) given one marginal $[AB]$ equals 
    \begin{equation}\label{eq:binom1coef}
        |\mathcal F_{AB}| = \prod_{1\leq i\leq I, 1\leq j\leq J} {{s_{ij+} + K-1} \choose {K-1}}.
    \end{equation}
\end{lemma}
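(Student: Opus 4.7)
The plan is to reduce the count to an independent product of classical compositions (stars-and-bars) counts, one for each $(i,j)$ pair. First I would recall that, by the setup in Section~\ref{sec:intro}, an $r$-way table $\mathbf n$ is in bijection with the $3$-way table $\mathbf n^*=\{s_{ijk}\}$ obtained by grouping the variables as $A,B,C$; hence counting elements of $\mathcal F_{AB}$ is equivalent to counting nonnegative integer $3$-way arrays $\{s_{ijk}\}$ whose $[AB]$-marginal is the prescribed $\{s_{ij+}\}$.

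Next I would observe that fixing the $[AB]$-marginal imposes the constraints
\begin{equation*}
    \sum_{k=1}^{K} s_{ijk} = s_{ij+}, \qquad 1\leq i\leq I,\ 1\leq j\leq J,
\end{equation*}
and these constraints are \emph{independent} across different pairs $(i,j)$, because each constraint involves a disjoint block of cells $\{s_{ij1},\dots,s_{ijK}\}$. Consequently,
\begin{equation*}
    |\mathcal F_{AB}| = \prod_{1\leq i\leq I,\ 1\leq j\leq J} N_{ij},
\end{equation*}
where $N_{ij}$ is the number of nonnegative integer tuples $(s_{ij1},\dots,s_{ijK})$ with $\sum_k s_{ijk}=s_{ij+}$.

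Then I would invoke the standard stars-and-bars identity, giving $N_{ij} = \binom{s_{ij+}+K-1}{K-1}$, and substitute to obtain the claimed formula. There is no real obstacle here; the only point worth emphasizing is the independence of the block constraints, which is what makes the count factor into a product. I would also briefly remark that the formula is valid for any $K\geq 1$ (with the convention $\binom{s_{ij+}}{0}=1$ when $K=1$, reflecting that the table is uniquely determined in that degenerate case), so the lemma applies uniformly regardless of whether $C$ is empty or not.
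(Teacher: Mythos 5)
Your proposal is correct and matches the paper's argument in substance: the paper likewise reduces the count to the number of ways of writing each marginal entry $s_{ij+}$ as an ordered sum of $K$ nonnegative integers, with the product over $(i,j)$ coming from the independence of the blocks. The only cosmetic difference is that you invoke the stars-and-bars identity directly, whereas the paper's appendix re-derives the binomial coefficient by induction on $K$ via nested summations.
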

We omit the proof of this lemma, as it follows from the definition of the binomial coefficients. 
It is simply a count of the number of ways we can write each entry $s_{ij+}$ in the marginal table as a sum of $K$ entries in the data table. %For convenience of the reader, information on an alternative version of this formula and its proof based on a partition count can be found Appendix~\ref{sec:app-code}.

\begin{remark}\rm
    We can find $s_{ij+}$ from the solutions of the Diophantine equation, since $s_{ij+}=x_j m_j c_{ij}$.

    With real data in mind, however, we might have to alter the formulas.  Specifically,
    the above formulas assume that the marginals $s_{ij+}$ are integers, but with real data due to possible rounding of observed conditional probabilities, the computed $s_{ij+}$'s may also be rounded.
    Recall that the Gamma function is defined so that $\Gamma(n)=(n-1)!$ for all integers $n$.  Since the binomial coefficient in (\ref{eq:binom1coef}) can be written
    in terms of factorials, if we replace $s_{ij+}$ with a real number instead of an integer, we get:
 \begin{equation}
        |\mathcal F_{AB}| = \prod_{1\leq i\leq I, 1\leq j\leq J}  \frac{\Gamma(s_{ij+} + K)}{K! \Gamma(s_{ij+} +1)} .
 \end{equation}
\end{remark}
For an example, see Section~\ref{sec:examples}.%%% XXXXX.

We can use this formula to derive the exact size of the table space given observed conditionals.
\begin{corollary}[Exact count of data tables given conditionals]
\label{cor:exactTotalTableCount}
      The number of possible $r$-way tables given observed conditionals $[A|B]$ is
      \begin{equation}
        |\mathcal F_{A|B}| = \sum_{i=1}^m |\mathcal F_{AB}(\mathfrak p_i)| ,
    \end{equation}
    where  
    $m$ is the number of integer solutions to (\ref{eq:diophantine}), and 
    each $|\mathcal F_{AB}(\mathfrak p_i)|$ can be computed using Lemma \ref{lm:tableCountFormula}.
\end{corollary}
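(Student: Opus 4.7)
The plan is to combine the two previously established structural results directly. First, I would invoke Corollary~\ref{cor:tableSpaceDecompositionByMarginals}, which states that $\mathcal F_{A|B}$ decomposes as a \emph{disjoint} union $\bigcup_{i=1}^m \mathcal F_{AB}(\mathfrak p_i)$ indexed by the $m$ positive integer solutions of the Diophantine equation (\ref{eq:diophantine}), each solution determining a distinct marginal $\mathfrak p_i$ via $s_{+j+}=m_jx_j$ (and correspondingly $s_{ij+}=m_jx_jc_{ij}$). Disjointness is the essential structural fact: distinct solutions yield distinct $[AB]$ marginals, and a single table $\bf n$ determines its $[AB]$ marginal uniquely, so no table can lie in two of the $\mathcal F_{AB}(\mathfrak p_i)$ simultaneously.

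Given the disjoint decomposition, the cardinality is additive, so
\begin{equation*}
|\mathcal F_{A|B}| = \left|\bigcup_{i=1}^m \mathcal F_{AB}(\mathfrak p_i)\right| = \sum_{i=1}^m |\mathcal F_{AB}(\mathfrak p_i)|.
\end{equation*}
Then I would apply Lemma~\ref{lm:tableCountFormula} to each summand, substituting the marginal entries $s_{ij+}=m_jx_jc_{ij}$ read off from the $i^{th}$ solution ${\bf x}=(x_1,\dots,x_J)$ of (\ref{eq:diophantine}), to obtain an explicit evaluation of every term in the sum.

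Honestly, there is no real obstacle here: the statement is essentially a one-line consequence of Corollary~\ref{cor:tableSpaceDecompositionByMarginals} together with Lemma~\ref{lm:tableCountFormula}, both of which have already been proven. The only point worth emphasizing is the disjointness of the decomposition, since it is what lets cardinalities add without an inclusion-exclusion correction; this follows because the $[AB]$-marginal map is a function on tables, and the marginals $\mathfrak p_i$ attached to distinct Diophantine solutions differ in at least one coordinate $s_{+j+}=m_jx_j$.
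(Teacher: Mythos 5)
Your proposal is correct and follows exactly the route the paper takes: the paper's proof is the one-line observation that the claim follows from Lemma~\ref{lm:tableCountFormula} and Corollary~\ref{cor:tableSpaceDecompositionByMarginals}, which is precisely your argument. Your added remark on why the union is disjoint (the $[AB]$-marginal map is a function on tables and distinct Diophantine solutions yield distinct marginals) is a worthwhile elaboration of a point the paper leaves implicit, but it is not a different approach.
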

\begin{proof}
    The claim follows by Lemma \ref{lm:tableCountFormula} and Corollary \ref{cor:tableSpaceDecompositionByMarginals}.
\end{proof}
 A $tablecount()$ function in R implements the above results and gives the corresponding counts. In practice, however, it may be computationally difficult to obtain the number of solutions to the Diophantine equation exactly.
One remedy is provided by approximating the number of those solutions. Then, this approximation can be extended to
give an approximate size for the table space $\mathcal F_{A|B}$.  By {\it approximation}  we mean a Riemann sum approximation of the integral which calculates the volume of a polytope for fixed $N$. 
We deal with the number of marginal tables first, returning to the notation of Lemma \ref{lm:structureOfSolnsOfDioph}:

\begin{proposition}[Approximate count of marginal tables given conditionals]
    \label{prop:approxNumOfSolnsDiophantine-viaAlgebra}
    Given observed conditionals $[A|B]$, the number of possible marginal tables $[AB]$ is
    approximately
    \begin{equation}\label{eq:geovol}
        |\mathcal F_{A|B}|_{AB}  \approx  \ds\frac{N^{J-1}gcd(m_1,m_2,...,m_J)}{(J-1)!\prod\limits_{i=1}^{J}{m_i}}.
    \end{equation}
This approximation may also be given by a Dirichlet integral 
   \begin{equation}\label{eq:intvol}
        |\mathcal F_{A|B}|_{AB} \approx \frac{gcd(m_1,...,m_J)}{m_J}\int\limits_{(x_1,...,x_{J- 1} ) \in \mathcal{M}}   1  dx_1 dx_2  \cdot  \cdot  \cdot dx_{J - 1},
    \end{equation}
    where $\mathcal{M}$ is the projection of the marginal polygon onto the $x_1x_2...x_{J-1}$-plane.
\end{proposition}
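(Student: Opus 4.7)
The plan is to recast the counting problem for the Diophantine equation (\ref{eq:diophantine}) as counting lattice points in a $(J-1)$-dimensional simplex subject to one congruence condition, then pass from a lattice-point count to the volume of the simplex by a Riemann-sum (Ehrhart-type) approximation. This yields (\ref{eq:intvol}) directly, and (\ref{eq:geovol}) after evaluating the Dirichlet-type volume in closed form.

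First, I would solve the equation for the last variable, writing $x_J=(N-\sum_{j<J}m_jx_j)/m_J$. Positive integer solutions of (\ref{eq:diophantine}) are then in bijection with points $(x_1,\dots,x_{J-1})\in\mathbb{Z}_{>0}^{J-1}$ satisfying two conditions: (i) $\sum_{j<J}m_jx_j<N$, which carves out the open simplex $\mathcal{M}$ described in the statement; and (ii) the congruence $\sum_{j<J}m_jx_j\equiv N\pmod{m_J}$, which forces $x_J$ to be a positive integer. The volume of $\mathcal{M}$ is computed by the change of variables $y_j=m_jx_j$, which maps $\mathcal{M}$ to the standard open simplex of ``size'' $N$, giving $\operatorname{vol}(\mathcal{M})=N^{J-1}/\bigl((J-1)!\prod_{j<J}m_j\bigr)$.

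Second, I would compute the density in $\mathbb{Z}^{J-1}$ of the integer points satisfying the congruence (ii). Consider the homomorphism
\[
\phi:\mathbb{Z}^{J-1}\longrightarrow \mathbb{Z}/m_J\mathbb{Z},\qquad (x_1,\dots,x_{J-1})\mapsto \sum_{j<J}m_jx_j\bmod m_J.
\]
The image of $\phi$ is the subgroup of $\mathbb{Z}/m_J$ generated by $\bar m_1,\dots,\bar m_{J-1}$, which equals the subgroup generated by $d:=\gcd(m_1,\dots,m_J)$ and therefore has order $m_J/d$. Since a solution of (\ref{eq:diophantine}) is assumed to exist, $d\mid N$, so the fiber $\phi^{-1}(N\bmod m_J)$ is a nonempty coset of $\ker\phi$ with density $d/m_J=\gcd(m_1,\dots,m_J)/m_J$ in $\mathbb{Z}^{J-1}$.

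Third, I would combine these two facts via the Riemann-sum approximation. For a region that scales linearly in $N$, the number of lattice points of a fixed-density coset inside the region equals the density times the volume, up to a boundary term of lower order in $N$; formally,
\[
\#\bigl\{\mathbf{x}\in\mathbb{Z}^{J-1}\cap\mathcal{M}:\phi(\mathbf{x})\equiv N\bmod m_J\bigr\}\;\approx\;\frac{d}{m_J}\,\operatorname{vol}(\mathcal{M}),
\]
which is exactly (\ref{eq:intvol}). Substituting the closed-form volume yields
\[
|\mathcal{F}_{A|B}|_{AB}\;\approx\;\frac{\gcd(m_1,\dots,m_J)}{m_J}\cdot\frac{N^{J-1}}{(J-1)!\prod_{j<J}m_j}\;=\;\frac{N^{J-1}\gcd(m_1,\dots,m_J)}{(J-1)!\prod_{i=1}^{J}m_i},
\]
which is (\ref{eq:geovol}).

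The main obstacle I expect is the density calculation, specifically identifying $\operatorname{im}(\phi)$ with the subgroup of multiples of $d$ in $\mathbb{Z}/m_J$; this relies on the elementary but essential identity $\gcd\bigl(\gcd(m_1,\dots,m_{J-1}),m_J\bigr)=\gcd(m_1,\dots,m_J)$. A secondary point is justifying the Riemann-sum step, but since $\mathcal{M}$ is a full-dimensional simplex scaling in $N$, the boundary contribution is of order $N^{J-2}$ and is absorbed by the ``$\approx$'' in the Proposition; this is consistent with the approximation intent stated before the Proposition.
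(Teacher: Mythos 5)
Your argument is correct, and it arrives at both formulas by a route that differs from the paper's in its key technical step. The paper works inside the affine hyperplane $\sum_j m_jx_j=N$: it takes the explicit lattice basis ${\bf v}_1,\dots,{\bf v}_{J-1}$ constructed in Lemma~\ref{lm:structureOfSolnsOfDioph}, computes the covolume of the solution lattice as the Gram-type determinant $\lvert\det A\rvert/\lVert u\rVert=\sqrt{m_1^2+\cdots+m_J^2}/\gcd(m_1,\dots,m_J)$, and divides the $(J-1)$-volume of the marginal polytope $G$ by it to get \eqref{eq:geovol}; the integral form \eqref{eq:intvol} is then obtained separately by projecting and computing the area of the projected unit cell with a second determinant. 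You instead eliminate $x_J$, view the solution set as the lattice points of $\mathcal M\subset\mathbb{R}^{J-1}$ lying in a single coset of $\ker\phi$ for the homomorphism $\phi:\mathbb{Z}^{J-1}\to\mathbb{Z}/m_J\mathbb{Z}$, and compute the density $\gcd(m_1,\dots,m_J)/m_J$ as the reciprocal of the index $[\mathbb{Z}^{J-1}:\ker\phi]=m_J/d$, using only $\gcd\bigl(\gcd(m_1,\dots,m_{J-1}),m_J\bigr)=\gcd(m_1,\dots,m_J)$. What your approach buys is economy and unification: it bypasses Lemma~\ref{lm:structureOfSolnsOfDioph} and both determinant computations entirely, derives \eqref{eq:intvol} and \eqref{eq:geovol} from one density-times-volume identity, and makes the nonemptiness condition $d\mid N$ fall out of the same group-theoretic statement. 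What the paper's approach buys is an explicit geometric picture of the solution lattice sitting in the hyperplane (with a concrete basis one can hand to the R code), at the cost of heavier linear algebra. Your handling of the error term (boundary contribution of order $N^{J-2}$ absorbed by the approximation) matches the paper's Riemann-sum justification.
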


A simple algebraic proof of this result can be found in Appendix \ref{sec:proof-of-prop:approxNumOfSolnsDiophantine-viaAlgebra}.
Note that by Theorem \ref{thm:diophantineSolutions=Marginals}, the number of possible marginal tables equals the 
number of positive integer solutions of Equation (\ref{eq:diophantine}).
Formula in equation (\ref{eq:geovol}) uses a geometric approach via volumes of cells in the lattice; the second formula in (\ref{eq:intvol}) realizes the same approximation using the integral formula for volumes. Section~\ref{sec:examples} illustrates the use of these approximation formulas. 

\begin{corollary}[Approximate count of data tables given conditionals]
    \label{cor:approxTableCount-Integral}
The number of possible $r$-way 
tables in $\mathcal F_{A|B}$ is approximately
\begin{equation}\label{eq:countapprox}
\frac{gcd(m_1,...,m_J)}{m_J}\int\limits_{(x_1,...,x_{J- 1} ) \in \mathcal{M}} {\prod\nolimits_{i,j} {\frac{{\Gamma (x_j m_j c_{ij}  + |
C|)}}{{\Gamma (|C|) \cdot \Gamma (x_j m_j c_{ij}  + 1)}}} } dx_1 dx_2  \cdot  \cdot  \cdot dx_{J - 1} ,
\end{equation}
where $\mathcal{M}$ is the projection of the marginal polygon onto the $x_1x_2...x_{J-1}$-plane.
\end{corollary}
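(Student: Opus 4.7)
The plan is to combine the exact count from Corollary~\ref{cor:exactTotalTableCount} with the lattice-to-integral approximation already developed in the proof of Proposition~\ref{prop:approxNumOfSolnsDiophantine-viaAlgebra}. By Corollary~\ref{cor:tableSpaceDecompositionByMarginals} and Lemma~\ref{lm:tableCountFormula}, the size of $\mathcal F_{A|B}$ is obtained by summing the Gamma-function version of $|\mathcal F_{AB}(\mathfrak p_i)|$ over all integer solutions $\mathbf x = (x_1,\dots,x_J)$ of the Diophantine equation (\ref{eq:diophantine}); the integral in (\ref{eq:countapprox}) should be read as a Riemann-sum approximation to this sum, with an explicit prefactor that records the density of integer solutions inside the relevant polytope.

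First, I would use the identification $s_{ij+}=x_j m_j c_{ij}$ (established at the end of Section~\ref{subsec:tableSpaceAndDioph}) together with the Gamma-function formula to rewrite each summand as
\begin{equation*}
|\mathcal F_{AB}(\mathfrak p_i)| \;=\; \prod_{i,j}\frac{\Gamma(x_j m_j c_{ij}+|C|)}{\Gamma(|C|)\,\Gamma(x_j m_j c_{ij}+1)},
\end{equation*}
so that, by Corollary~\ref{cor:exactTotalTableCount},
\begin{equation*}
|\mathcal F_{A|B}| \;=\; \sum_{\mathbf x\in\mathcal S} \prod_{i,j}\frac{\Gamma(x_j m_j c_{ij}+|C|)}{\Gamma(|C|)\,\Gamma(x_j m_j c_{ij}+1)},
\end{equation*}
where $\mathcal S$ is the set of positive integer solutions to $\sum_j m_j x_j = N$. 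The right-hand side is a sum over the lattice points $\mathcal S$, viewed as a subset of the affine hyperplane $\sum_j m_j x_j = N$.

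Next, I would carry over the geometric setup used for Proposition~\ref{prop:approxNumOfSolnsDiophantine-viaAlgebra}: solving $x_J = (N-\sum_{j<J} m_j x_j)/m_J$ projects $\mathcal S$ bijectively onto a sublattice of $\mathbb Z^{J-1}$ lying in the region $\mathcal M$, and by Lemma~\ref{lm:structureOfSolnsOfDioph} the covolume of this projected lattice equals $m_J/\gcd(m_1,\dots,m_J)$. Replacing the lattice sum by an integral over $\mathcal M$ with respect to Lebesgue measure therefore introduces the prefactor $\gcd(m_1,\dots,m_J)/m_J$, and the summand $\prod_{i,j}\Gamma(x_j m_j c_{ij}+|C|)/(\Gamma(|C|)\Gamma(x_j m_j c_{ij}+1))$ becomes the integrand, yielding exactly (\ref{eq:countapprox}). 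Since the integrand is continuous in $(x_1,\dots,x_{J-1})$, this Riemann-sum interpretation makes the approximation meaningful.

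The main obstacle is not algebraic but conceptual: one has to make precise that the prefactor $\gcd(m_1,\dots,m_J)/m_J$ is the correct lattice-density correction when one parametrizes solutions by the first $J-1$ coordinates, rather than the naive $1/\prod m_j$ one might guess from volume considerations. This is already handled in the proof of Proposition~\ref{prop:approxNumOfSolnsDiophantine-viaAlgebra}, so my proof would reduce to invoking that argument with the constant integrand $1$ replaced by the product of Gamma ratios, and remark that the approximation quality is governed by how slowly that integrand varies between adjacent lattice points of $\mathcal S$ inside $\mathcal M$.
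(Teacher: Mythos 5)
Your proposal is correct and follows essentially the same route as the paper: the paper's proof likewise combines the exact decomposition (sum over marginals of the Gamma-function count from Lemma~\ref{lm:tableCountFormula}) with the Riemann-sum/lattice-covolume approximation established in the proof of Proposition~\ref{prop:approxNumOfSolnsDiophantine-viaAlgebra}, which is exactly where the prefactor $\gcd(m_1,\dots,m_J)/m_J$ comes from. You have merely written out in detail the steps the paper compresses into two sentences.
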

\begin{proof}
	The claim follows from 	Lemma~\ref{lm:tableCountFormula} and Proposition~\ref{prop:approxNumOfSolnsDiophantine-viaAlgebra}. Note that the total number of $r$-way tables equals the sum over all possible marginals of the number of tables for a fixed marginal. The approximation comes from using the approximate count in equation (\ref{eq:countapprox}). %Since \ref{prop:approxNumOfSolnsDiophantine-viaAlgebra} gives an approximation to the number of marginals, we obtain an approximate count of possible data tables given the conditional. 
\end{proof}

\section{Implications for cell bounds and Markov bases}
\label{sec:CellMarkov}

%%%%%%%%%%%%%%%%%%%%%%%%%%%%%%%%%%%%%%%%
%%%%%%%%%%%%%%BOUNDS%%%%%%%%%%%%%%%%%
%%%%%%%%%%%%%%%%%%%%%%%%%%%%%%%%%%%%%%%%
\subsection{  Cell bounds}
\label{subsec:bounds}

There has been much discussion on calculation of bounds on cell entries given the marginals (e.g., see \cite{dobra:fien:2009} and related references), and to a limited extent the bounds given the observed conditional probabilities; e.g.,  see \cite{slavkovic2_2004} and \cite{smuck:slav:2008}. Such values are useful for determining the support of underlying probability distributions. In the context of data privacy, the bounds are useful for assessing disclosure risk; tight bounds imply higher disclosure risk. We can use the structure of the space of possible tables to obtain sharp integer bounds for the cell counts.     
Recall that we assume that observed conditional probabilities are exact. 

There are a number of different ways to get cell bounds: (1) using linear and integer programming to solve the system of linear equations of (\ref{eq:spaceoftables});  (2) using the result of equivalence of marginal and conditional fibers (c.f., Corollary~\ref{cor:equalspace}), the bounds are given by $0\leq s_{ijk} \leq s_{ij+}$;  and (3) using our decomposition result (c.f., Corollary~\ref{cor:tableSpaceDecompositionByMarginals}) to enumerate all possible marginal tables, and based on those get the cell bounds $min_l (s_{ij+})_l \leq s_{ijk} \leq max_l (s_{ij+})_l$, where $l$ is the number of possible marginal tables $AB$ given $A|B$.

Besides the above three methods for computing the exact cell bounds, there is a fourth method that computes approximate cell bounds by allowing arbitrary rounding of $P(A|B)=c_{ij}$.
The proof is straightforward: simply recall that %  The integer cell bounds of $\bf{n}$ are easily determined from the solutions of the Diophantine equation \eqref{eq:diophantine}. Recalling that 
$\sum_j x_jm_j=N$ and  $s_{ij+}=m_jx_jc_{ij}$. 

%once we know the values the $x_i$'s of the linear Diophantine equation \eqref{eq:diophantine} take, and the $i^{th}$ row of the collapsed $I\times J$ table sums up to at least the value of $m_i$. Based on the structure of the solution set, we know that that $x_i$ is less than $\frac{1}{m_i}(N-\sum\limits_{j\neq i}{m_j})$. 

\begin{theorem}\label{th:boundsonmargin}
Given  $\mathcal{T}=\{P(A|B),N\},$ 
an approximate (relaxation) integer cell bounds are given by
\begin{equation}
m_j\cdot c_{ij} \le s_{ij + }  \le (N - \sum\limits_{t\neq j}{m_t} )
\cdot c_{ij}.
\end{equation}
\end{theorem}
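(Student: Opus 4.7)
The plan is to use the parametrization of feasible marginals established earlier in Theorem~\ref{thm:diophantineSolutions=Marginals} (and made explicit in the remark following Corollary~\ref{cor:tableSpaceDecompositionByMarginals}): every marginal $[AB]$ consistent with $\mathcal{T}=\{P(A|B),N\}$ corresponds to a positive integer solution $(x_1,\dots,x_J)$ of the Diophantine equation $\sum_{j=1}^J m_j x_j = N$, with the marginal entries given by $s_{+j+}=m_j x_j$ and hence $s_{ij+}=m_j x_j c_{ij}$. The bounds on $s_{ij+}$ therefore reduce to bounds on $m_j x_j$ as $(x_1,\dots,x_J)$ ranges over the positive integer solutions of this equation.

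For the lower bound, I would observe that since the problem requires every $B$-marginal to be strictly positive (recall the second constraint in (\ref{eq:spaceoftables})), each admissible $x_j$ satisfies $x_j\ge 1$. Multiplying by $m_j c_{ij}$ immediately gives $s_{ij+}=m_j x_j c_{ij}\ge m_j c_{ij}$. For the upper bound, I would rewrite the Diophantine constraint as $m_j x_j = N - \sum_{t\ne j} m_t x_t$. Using $x_t\ge 1$ for every $t\ne j$, we obtain $m_j x_j \le N - \sum_{t\ne j} m_t$, and multiplying by $c_{ij}$ yields the stated upper bound on $s_{ij+}$.

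Putting these two steps together completes the proof. There is essentially no technical obstacle: the only subtlety is to make clear why the bounds are described as \emph{approximate} (or a \emph{relaxation}). The reason is twofold: first, when $c_{ij}$ is an exact rational in lowest terms $g_{ij}/h_{ij}$, the values $m_j x_j c_{ij}$ are automatically integers, but allowing $c_{ij}$ to be an arbitrary rounded value breaks integrality, so the inequalities only bound the real-valued relaxation of $s_{ij+}$; second, these are bounds on the $[AB]$-marginal cell totals rather than on the individual cell counts $s_{ijk}$ of the $r$-way table, for which one must additionally use $0\le s_{ijk}\le s_{ij+}$. It is worth noting explicitly in the writeup that the inequalities $x_j\ge 1$ for all $j$ are the only facts about the solution set used, so no appeal to the finer lattice structure from Lemma~\ref{lm:structureOfSolnsOfDioph} is required here.
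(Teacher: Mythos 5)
Your proposal is correct and follows essentially the same route as the paper: the paper's own justification is the one-line observation that $\sum_j x_j m_j = N$ and $s_{ij+} = m_j x_j c_{ij}$, from which the bounds follow exactly as you derive them by using $x_t \ge 1$ for all $t$. Your additional remarks on why the bounds are only a relaxation are a sensible elaboration but not a departure from the paper's argument.
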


Furthermore, an approximate number of values that $x_i$ can take is given by\\
\begin{equation}
\frac{{(N-\sum\limits_{j\neq i}{m_j}) \cdot(m_1 ,m_2 ,...,m_J)}}{{m_i\cdot(m_{1,} ...,m_{i - 1},m_{i + 1} ,...,m_J )}}. 
\end{equation}

These bounds  can be made sharper if we know the rounding scheme of $c_{ij}$'s. The effect of rounding on bounds and on calculating Markov bases given observed conditionals is of special interest, but we defer that work to a future study. Some preliminary results and discussion are provided in \cite{smuck:slav:zhu:2009} and \cite{lee-thesis}.

%%%%%%%%%%%%%%%%%%%MARKOV BASES %%%%%%%%%%%%
\subsection{Markov bases}
\label{subsec:MB}
In this section, we describe a structure for the Markov bases for the table space $\mathcal{F_T}$ as defined in \eqref{eq:spaceoftables}, resulting from the Corollary~\ref{cor:tableSpaceDecompositionByMarginals}, which could lead to their simplified computation. %computational efficiency. 

A set of minimal Markov moves allows us to build a connected Markov chain and perform a random walk over all the points in %the fiber.
any given fiber. Thus, we can either enumerate or sample from the space of tables via Sequential Importance Sampling (SIS) or Markov Chain Monte Carlo (MCMC) sampling; e.g., see \cite{dobra2006dam} and \cite{chen2006sis}.
A Markov basis for a model, or for its design matrix,
is a set of moves that are guaranteed to connect all points with the same sufficient statistic. In a seminal paper by \cite{diaconis1998}, these bases were used for performing exact conditional inference over contingency tables given marginals.
%Let us recall the definition.

\begin{definition} [Definition (\cite{diaconis1998}).]
    Let $T$ be a $d\times n$ matrix whose entries are nonnegative integers. Assume $T$ has no zero columns.
    In addition, denote by $\mathcal F_t$ the fiber for $t$, that is, the set of all $d$-tuple preimages of $t$ under the map defined by $T$:
    \[
         \mathcal{F}_t=\{f\in\mathbb{N}^d : Tf=t\},
    \]
    where $t$ is in $\mathbb{N}^d\backslash \{0\}$.

    A \emph{Markov basis} of $T$ is a set of vectors $f_1,\dots,f_L\in \mathbb Z^n$ with the following properties:
    First, the vectors must be in the kernel of $T$:
    \[
        Tf_i=0,\quad 1\leq i\leq L.
    \]
    Secondly, they must connect all vectors in a given fiber:
     for any $t\in\mathbb{N}^d\backslash \{0\}$ and any $f, g\in \mathcal{F}_t$,
      there exist $(\epsilon_1,f_{i_1}),...,(\epsilon_K,f_{i_K})$ with $\epsilon_i=\pm1$,
      such that
      \[
        g=f+\sum\limits_{j=1}^{K}{\epsilon_jf_{i_j}}
    \]
    and, at any step, we remain in the fiber:
    \[
        f+\sum\limits_{j=1}^{a}{\epsilon_jf_{i_j}}\geq0\mbox{ for all $a$ such that } 1\leq a\leq K.
    \]
\end{definition}
Note that the definition of a Markov basis does not depend on the choice of $t$; it must connect \emph{each} of the fibers.

In our problem, $T$ is the matrix $M$ in equation \eqref{eq:spaceoftables}.
Thus, the fiber $\mathcal F_t$ contains the space of possible data tables that satisfy the constraints described in \eqref{eq:spaceoftables} for the given vector $t$.
Theorem 3.1. in \cite{diaconis1998} is considered one of the fundamental theorems in algebraic statistics and stats that a Markov basis of $T$ can be calculated as a generating set of the toric ideal $I_T$ for the design matrix $T$ of the model; for an introduction to toric varieties of statistical models see \cite{drton2009lectures}. 
%% I DON"T THINK WE NEED TO TALK ABOUT TORIC IDEAL here!!

There are a number of algebraic software packages for computing generating sets of toric ideals, and thus the Markov bases, but the most efficient to date is 4ti2 (\cite{4ti2}). %Of the computational algebra software packages for computing generating sets of toric ideals, the most efficient to date is 4ti2 (\cite{4ti2}).
Sometimes, though, the matrix $M$ can be large, and the computation may take too long.
To alleviate some of the computational problems with contingency tables in practice, we use our table-space decomposition result (c.f. Corollary ~\ref{cor:tableSpaceDecompositionByMarginals}) to split the Markov basis into two sets. This could allow for parallel computation of the Markov sub-bases. 
\begin{corollary}\label{cor:MBTwoSubsetsofMoves}
The Markov basis for the space of tables given the conditional can be split into two sets of moves:
\begin{itemize}
\item[1)] the set of moves that fix the margin, and
\item[2)] the set of moves that change the margin.
\end{itemize}
\end{corollary}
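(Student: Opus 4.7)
My proof plan hinges on the disjoint union structure from Corollary~\ref{cor:tableSpaceDecompositionByMarginals}. Write $\mathcal F_{A|B}=\bigsqcup_{i=1}^m \mathcal F_{AB}(\mathfrak p_i)$, where each $\mathfrak p_i$ is a marginal arising from a solution of the Diophantine equation~(\ref{eq:diophantine}). I would build the claimed Markov basis as $\mathcal B=\mathcal B_{\text{fix}}\cup \mathcal B_{\text{change}}$, where $\mathcal B_{\text{fix}}=\bigcup_{i=1}^m \mathcal B_i$ with each $\mathcal B_i$ a Markov basis for $\mathcal F_{AB}(\mathfrak p_i)$ in the usual sense (margin fixed), and $\mathcal B_{\text{change}}$ a set of ``bridging'' moves chosen so that, for every pair of marginals $\mathfrak p_i,\mathfrak p_j$, there exist tables $f\in\mathcal F_{AB}(\mathfrak p_i)$ and $g\in\mathcal F_{AB}(\mathfrak p_j)$ with $g-f\in\mathcal B_{\text{change}}$ (or expressible as a sign-sum of $\mathcal B_{\text{change}}$ elements). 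Every move in $\mathcal B_{\text{fix}}$ kernel-annihilates the part of $M$ recording the margin by construction, and every move in $\mathcal B_{\text{change}}$ lies in $\ker M$ because both its endpoints satisfy $M\mathbf n=\mathbf t$.

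The forward steps would be: (i) verify that $\mathcal B\subset\ker M$; (ii) verify connectivity inside each $\mathcal F_{AB}(\mathfrak p_i)$ using $\mathcal B_i$, which is immediate from the definition of a margin-Markov basis; (iii) verify cross-marginal connectivity. For step (iii), given $f\in\mathcal F_{AB}(\mathfrak p_i)$ and $g\in\mathcal F_{AB}(\mathfrak p_j)$ with $i\ne j$, apply a bridging move from $\mathcal B_{\text{change}}$ to move $f$ to some $f'\in\mathcal F_{AB}(\mathfrak p_j)$, then use $\mathcal B_j\subset\mathcal B_{\text{fix}}$ to connect $f'$ to $g$. Iterating, any two points in $\mathcal F_{A|B}$ are connected by elements of $\mathcal B$. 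This is a standard ``fiber product'' / ``lift and project'' argument: the marginal map sends $\mathcal F_{A|B}$ to the finite set $\{\mathfrak p_1,\ldots,\mathfrak p_m\}$, and one needs to connect both the fibers above each point and the base.

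The main obstacle is the positivity constraint in the Markov basis definition: at every intermediate step of the walk, the entries must remain nonnegative. A bridging move $f\mapsto f'=f+b$ with $b\in\mathcal B_{\text{change}}$ might be applicable at some tables in $\mathcal F_{AB}(\mathfrak p_i)$ but not others (the negative support of $b$ must be dominated entrywise by $f$). So I cannot just grab any single $b$ bridging $\mathfrak p_i$ to $\mathfrak p_j$; I need either a rich enough $\mathcal B_{\text{change}}$ or the ability to first move within $\mathcal F_{AB}(\mathfrak p_i)$ (using $\mathcal B_i$) to a table from which $b$ is applicable. The cleanest way to handle this is to prove a lemma that every $\mathcal F_{AB}(\mathfrak p_i)$ is nonempty (which follows from Theorem~\ref{thm:diophantineSolutions=Marginals}) and to close $\mathcal B_{\text{change}}$ under the bridging moves obtained from any lift of a generator of the kernel of the marginal-reduction map; then combine with the within-fiber connectivity guaranteed by $\mathcal B_i$.

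Finally, since the two sets partition $\mathcal B$ by whether the move alters the $B$-margin, this gives exactly the two-set decomposition claimed, and the construction makes it transparent that $\mathcal B_{\text{fix}}$ can be computed in parallel across the $m$ sub-fibers with any standard toric-ideal algorithm, while $\mathcal B_{\text{change}}$ requires only bridging moves, as the corollary asserts.
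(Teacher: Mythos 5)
Your proposal rests on exactly the same pillar as the paper's proof --- the disjoint-union decomposition $\mathcal F_{A|B}=\bigcup_{i=1}^{m}\mathcal F_{AB}(\mathfrak p_i)$ of Corollary~\ref{cor:tableSpaceDecompositionByMarginals} --- but you push it further than the paper does. The paper's argument is purely classificatory: it takes a Markov basis of $M$ as given (it exists by the Diaconis--Sturmfels theory), observes that each move either preserves the $[AB]$-margin, hence acts within a single sub-fiber $\mathcal F_{AB}(\mathfrak p_i)$, or changes it, hence connects sub-fibers, and stops there; the positivity of intermediate tables never enters because nothing is being constructed. You instead try to \emph{assemble} a Markov basis as $\mathcal B_{\text{fix}}\cup\mathcal B_{\text{change}}$ from per-sub-fiber bases (note these coincide for all $i$, since a Markov basis depends only on the $[AB]$-margin design matrix, not on the fiber) plus bridging moves. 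That is a strictly stronger claim, and it is the one that would actually justify the parallel-computation remark following the corollary; to your credit, you correctly identify the genuine obstruction --- a bridging move $b$ need not be applicable at every table of $\mathcal F_{AB}(\mathfrak p_i)$, and it is not automatic that one can first walk within the sub-fiber to a table dominating the negative support of $b$ --- which the paper's one-line proof never confronts. Your proposed fix (closing $\mathcal B_{\text{change}}$ under lifts of kernel generators of the marginal-reduction map) is plausible but left as a sketch, so the constructive version is not fully closed. For the corollary as literally stated, however, your steps (i)--(iii) restricted to the classificatory reading suffice and match the paper's reasoning; the unresolved applicability issue affects only the stronger statement you chose to aim for.
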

\begin{proof}
By Corollary  \ref{cor:tableSpaceDecompositionByMarginals}, the fiber $\mathcal F_{A|B}$ of tables given the conditional is a disjoint union of the sub-fibers $\mathcal F_{AB}(\mathfrak p_i)$ given the fixed marginals represented by the points $\mathfrak p_i$, for $i=1,\dots, m$.
By definition, the set of Markov moves consisting of the moves that change the margin connect the sub-fibers $\mathcal F_{AB}(\mathfrak p_i)$, for $i=1,\dots, m$.
Thus, the Markov basis connecting all of $\mathcal F_{A|B}$ consists of the moves connecting each sub-fiber $\mathcal F_{AB}(\mathfrak p_i)$ (the first set of moves) and the moves connecting each sub-fiber to another (the second set of moves).
\end{proof}

%Note that the first set of moves has been studied; for references to the literature, see Section~\ref{sec:introduction}. 
The moves that fix the margins have been studied in the algebraic statistics literature; for some recent advances in that area, see \cite{AokiTakemura03}, \cite{aoki2008minimal}, \cite{DeLoera06}, and references given therein. Most recently,  \cite{Dobra2012} provided an efficient algorithm to dynamically generate the moves given the margins. Less work has been done on studying Markov bases given observed (estimated) conditionals, e.g., see \cite{sesa-thesis, lee-thesis}. Since we know, by Theorem \ref{thm:diophantineSolutions=Marginals}, that the margins correspond to solutions to the Diophantine equation (\ref{eq:diophantine}),
we can find the latter set of moves by computing the Markov basis for the coefficient matrix of the Diophantine equation.

The number of Markov basis elements for this matrix seems to be small. More specifically, computations suggest the number of Markov basis elements that change the margin is as small as possible: 
\begin{conjecture}\label{prop:conjecture}
    In the case of small conditionals (i.e., $C \neq \emptyset$), the coefficient matrix of the Diophantine Equation (\ref{eq:diophantine})
    has a Markov basis consisting of $J-1$ elements, where $J-1$ is the dimension of the underlying lattice.
    In other words, the corresponding toric ideal equals the lattice basis ideal.
\end{conjecture}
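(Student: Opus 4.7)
The plan is to prove that the toric ideal $I_M$ of the $1 \times J$ coefficient matrix $M=(m_1,\dots,m_J)$ of the Diophantine equation (\ref{eq:diophantine}) is generated by exactly $J-1$ binomials coming from a basis of $\ker M \cap \mathbb{Z}^J$. Lemma \ref{lm:structureOfSolnsOfDioph} supplies such a basis $\mathbf{v}_1,\dots,\mathbf{v}_{J-1}$ explicitly in terms of the $m_j$, and these produce binomials $B_i := \mathbf{x}^{\mathbf{v}_i^+}-\mathbf{x}^{\mathbf{v}_i^-}$ generating the lattice basis ideal $L$. Since each $\mathbf{v}_i \in \ker M$, the inclusion $L \subseteq I_M$ is immediate, so the substance of the conjecture is the reverse inclusion.

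For the reverse inclusion, the standard criterion (see e.g.\ Sturmfels' toric-ideals framework) is that $L$ be saturated with respect to the product $x_1 \cdots x_J$. I would attempt this in two complementary ways. First, I would check that the basis $\{\mathbf{v}_i\}$ constructed in Lemma \ref{lm:structureOfSolnsOfDioph} is \emph{mixed dominating}, which via a Hosten--Shapiro type argument would force $L = I_M$. Second, in parallel, I would search for a term order under which $\{B_1,\dots,B_{J-1}\}$ is itself a Gr\"obner basis of $I_M$, verifying that every $S$-pair reduces to zero, since a Gr\"obner basis of cardinality equal to the codimension necessarily generates a complete-intersection ideal.

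The main obstacle is that neither of these conditions is automatic. For a generic $1 \times J$ numerical-semigroup matrix the toric ideal requires strictly more than $J-1$ generators; already with $J=3$ and $(m_1,m_2,m_3)=(7,8,15)$ the minimal generating set exceeds two binomials. Any proof must therefore genuinely use specific features of the coefficients arising as $m_j = \mathrm{lcm}_i(h_{ij})$ with $c_{ij} = g_{ij}/h_{ij}$ summing to one down each column, together with the hypothesis $C \neq \emptyset$ (i.e.\ $K \geq 2$) that was singled out in the conjecture. I suspect the role of $K \geq 2$ is that lifting a margin-changing move in the Diophantine kernel to an actual table-move requires distributing changes across the $K$ cells in the $C$-direction, so part of the argument may really be about \emph{lifted} table-moves rather than about $I_M$ in isolation; I would re-examine whether the conjecture should assert a complete-intersection property of $\langle m_1,\dots,m_J\rangle$ outright or something weaker after this lifting.

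Before committing to a proof, my first concrete step would be an experimental sweep in 4ti2: generate conditional tables with $K \geq 2$ and a wide range of denominators $h_{ij}$, compute the Markov basis of the resulting $M$, and tabulate the number of minimal binomials. Either the count sticks at $J-1$ across a large sample, strengthening the conjecture and pointing toward which Delorme-style gluing construction realizes the semigroup as a complete intersection, or a counterexample emerges that pinpoints the exact structural gap and refines the hypothesis. The hardest part, assuming the conjecture is true, will be converting the \emph{probabilistic} constraint $\sum_i g_{ij}/m_j = 1$ into the \emph{algebraic} constraint that $\langle m_1,\dots,m_J\rangle$ is a complete-intersection numerical semigroup; that translation is where I expect the argument to either close or collapse.
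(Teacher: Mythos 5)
There is nothing in the paper to compare your argument against: the statement is labelled a \emph{conjecture}, is supported only by the 4ti2 computations in Section~4, and the Conclusions explicitly list proving it as future work. Your proposal is likewise not a proof, and you say so yourself; so both you and the authors stop at essentially the same place. That said, your reduction of the problem is correct and is sharper than anything written in the paper: a minimal Markov basis of the $1\times J$ matrix $(m_1,\dots,m_J)$ has $J-1$ elements if and only if the toric ideal of the associated monomial curve is a complete intersection, the inclusion of the lattice-basis ideal generated by the binomials coming from Lemma~\ref{lm:structureOfSolnsOfDioph} into $I_M$ is indeed trivial, and the entire content of the conjecture is the saturation step (equivalently, that $\langle m_1,\dots,m_J\rangle$ is a complete-intersection numerical semigroup). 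Identifying that the proof must exploit the special origin of the $m_j$ as $\mathrm{lcm}$'s of denominators of probabilities summing to one is exactly the right diagnosis of where the difficulty lives.

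Two concrete corrections. First, your proposed obstruction $(m_1,m_2,m_3)=(7,8,15)$ is not a counterexample to genericity in the way you intend: since $15=7+8$, the toric ideal is $(x_3-x_1x_2,\ x_2^{7}-x_1^{8})$, a complete intersection with exactly $J-1=2$ minimal generators. The standard non-complete-intersection triple is $(3,4,5)$, whose toric ideal requires three minimal generators. Second, and more importantly, $(3,4,5)$ \emph{is} realizable from a small conditional: take $A$ binary with columns $(1/3,2/3)$, $(1/4,3/4)$, $(1/5,4/5)$ and any $K\geq 2$. So if ``Markov basis of the coefficient matrix'' is read literally as the minimal generating set of the toric ideal of $(m_1,\dots,m_J)$, the conjecture appears to fail, and the hypothesis $C\neq\emptyset$ cannot save it, because the coefficient matrix of \eqref{eq:diophantine} does not see $C$ at all. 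This strongly supports your suspicion that the intended claim concerns the margin-changing moves inside a Markov basis of the full matrix $M$ of \eqref{eq:spaceoftables}, where the positivity of the $B$-margins and the $K\geq 2$ cells available in the $C$-direction give extra room to connect fibers; your planned 4ti2 sweep is the right first step, and I would begin it with the $(3,4,5)$ instance above, since it will immediately reveal which reading of the conjecture the authors' computations actually support.
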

Note that the assumption $C\neq \emptyset$ is necessary, as the Example in \ref{sec:example:conj-not-true-full-conditionals} shows. \\
If the conjecture were true, it would imply the following on the size of the entire Markov basis:
\begin{conjecture}\label{cor:MBMinimalNumberofElements}
    A minimal Markov basis of the matrix $M$ in (\ref{eq:spaceoftables}) contains $|B|-1+(|C|-1)\times|B|\times|A|$ elements.
\end{conjecture}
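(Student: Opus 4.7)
The plan is to combine Corollary~\ref{cor:MBTwoSubsetsofMoves} with (the assumed) Conjecture~\ref{prop:conjecture} and count the two parts of the resulting minimal Markov basis separately. By Corollary~\ref{cor:MBTwoSubsetsofMoves}, every Markov basis of $M$ splits into moves that fix the $[AB]$ margin and moves that change it; showing these two groups each have the required cardinality will give the claimed total $|B|-1+(|C|-1)\cdot|B|\cdot|A|$.

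For the margin-fixing moves, fix any margin $\mathfrak p$ arising from a solution of~(\ref{eq:diophantine}). The sub-fiber $\mathcal F_{AB}(\mathfrak p)$ decomposes as a direct product over the $|A|\cdot|B|$ pairs $(i,j)$, where the $(i,j)$-factor is the simplex-fiber consisting of tuples $(s_{ij1},\dots,s_{ijK})\in\mathbb Z_{\geq 0}^{K}$ summing to $s_{ij+}$. A minimal Markov basis for such a simplex-fiber has $K-1$ difference-moves (for instance, the moves decreasing $s_{ijk}$ by one while increasing $s_{ij,k+1}$ by one, for $k=1,\dots,K-1$); this count is forced since the kernel lattice is $(K-1)$-dimensional and the simplex geometry guarantees that a lattice basis already forms a Markov basis. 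Taking the union over the $|A|\cdot|B|$ pairs yields $(|C|-1)\cdot|A|\cdot|B|$ margin-fixing moves, the second summand.

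For the margin-changing moves, Conjecture~\ref{prop:conjecture} provides a Markov basis $v^{(1)},\dots,v^{(J-1)}$ of the coefficient matrix of~(\ref{eq:diophantine}), giving exactly $|B|-1$ moves at the margin level. I would lift each $v=(v_1,\dots,v_J)$ to a full-table move by distributing the prescribed change $\Delta s_{ij+}=c_{ij}m_j v_j$ across the entries $s_{ij1},\dots,s_{ijK}$ (for instance, by concentrating it in the slot $k=1$, using that $m_j/h_{ij}\in\mathbb Z$ guarantees integrality). This yields the first summand $|B|-1$.

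The main obstacle is to justify that the resulting union is a Markov basis of $\mathcal F_{A|B}$ rather than merely of each piece, and that the total count is minimal. Connectivity is the delicate point: a direct lift of a margin-changing move can violate non-negativity at an intermediate table, so one must argue that by interleaving margin-fixing moves---which reshuffle counts inside a sub-fiber without touching $s_{ij+}$---one can always steer to a state where the lifted move is admissible, and thereby reach any target sub-fiber $\mathcal F_{AB}(\mathfrak p')$. Minimality then follows: margin-fixing moves cannot connect distinct sub-fibers, so Conjecture~\ref{prop:conjecture} forces the count $|B|-1$ on the second group, while within each sub-fiber the product structure and Graver-type arguments force the $(|C|-1)\cdot|A|\cdot|B|$ count on the first group.
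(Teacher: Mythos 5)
The first thing to note is that the paper does not prove this statement: it is labeled a conjecture, explicitly framed as something that \emph{would} follow if Conjecture~\ref{prop:conjecture} were true, and it is supported in the paper only by computational examples in Section~\ref{sec:examples}. So there is no paper proof to compare yours against, and a complete argument would go beyond what the authors themselves claim. That said, your overall strategy is the natural one and is consistent with the paper's Corollary~\ref{cor:MBTwoSubsetsofMoves}: split a putative basis into $(|C|-1)\cdot|A|\cdot|B|$ moves fixing the $[AB]$ margin (a lattice basis of the product of simplex fibers) and $|B|-1$ lifted moves coming from the coefficient matrix of the Diophantine equation. Those counts do match the paper's worked examples.

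However, your argument has genuine gaps, and they sit exactly where the difficulty lies, which is presumably why the authors left this as a conjecture. First, you rely on Conjecture~\ref{prop:conjecture}, which is itself unproven (and fails for full conditionals, as the example in Section~\ref{sec:example:conj-not-true-full-conditionals} shows), so at best you obtain a conditional statement. Second, connectivity of the union is asserted rather than proved: a lifted margin-changing move with the entire change $c_{ij}m_jv_j$ concentrated in the slot $k=1$ can violate non-negativity, and the claim that one can always ``steer'' with margin-fixing moves to a table where the lift is applicable must be verified for \emph{every} fiber of $M$ (every right-hand side $t$), not just the observed one; fibers with very small margins are precisely where such interleaving arguments tend to break, and it is not obvious that a single lift per margin-level move suffices rather than several lifts distributing the change differently. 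Third, minimality does not follow from minimality of the two pieces separately: even granting that margin-fixing moves cannot connect distinct sub-fibers, you would still need to rule out a smaller set that mixes the two types, and the ``Graver-type arguments'' you invoke for the product structure are not spelled out. As it stands, your proposal is a plausible outline in the spirit of the paper's decomposition, but it is an outline of an open conjecture, not a proof.
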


Supporting examples for the above conjectures are included in Section~\ref{sec:examples}. \\

On a related note, Peter Malkin has shown (in personal communication) that under certain assumptions, the number of solutions to the \emph{homogeneous} linear Diophantine equation is exactly the dimension of the lattice, where by homogeneous we mean the right-hand side is zero:
 Let $D$ be the minimal size of all $det(L_i)$, where $L_i$ is the projection of the lattice $L$ onto all variables except the $i^{th}$ variable.
 In general, a $(k-1)$-dimensional lattice in $k$ variables has a Markov basis of size at least $(k-1)$ and at most $(k-2)D+1$.
 Note that if $D=1$, then the upper bound is $k-1$.
 The size of the Markov basis for the $k-1$-dimensional lattice can be obtained as a consequence of a result in \cite{StWeZi} and the Project-and-Lift method from \cite{HeMa}. 
  Namely, Proposition 4.1. of \cite{StWeZi} states that the maximal size of a Gr\"obner and thus a Markov basis for a $k$-dimensional lattice $L$ in $k$ variables is at most $(k-1)det(L)+1$. They state without proof that $(k-2)det(L)+k+1$ is also an upper bound.
  The Project-and-Lift method is the one implemented in 4ti2 (\cite{4ti2}).

 Even though we cannot show that $D=1$ holds, the conjecture above says that the size of the Markov basis is actually as small as possible.
 It would be of interest to obtain bounds tighter than the general one in the case of a Diophantine equation arising from the study of the table space.
 For more about the sizes of Markov bases and computing them, see \cite{Malkin-thesis}.

%%%%%%% DAG STUFF %%%%%
\subsection{Extension of relations to marginals via DAGs}
\label{subsec:dag}

%{\bf  address referee comment: ``What is the special role played by the structure of DAGs in this context? Are there any similar results that can be developed for other types of graphical models?" We think this is an interesting open question. }
Given the marginals only, \cite{dobra2003markov} and  \cite{dobra2000bounds} have used graphical models in computing Markov bases and for calculating bounds for disclosure risk assessment. In this section, we provide extensions to those results, to the bounds in Section~\ref{subsec:bounds} and to Problem \ref{mainProb} by considering combinations of multiple conditional arrays and their relations to corresponding marginals via Directed Acyclic Graphs (DAGs); see Section~\ref{subsec:exdag} for an example. 
%For more on graphical models see \cite{Lauritzen1996}, and for their use in computing Markov bases and for calculating bounds for disclosure risk assessment given marginals only, see \cite{dobra2003markov} and  \cite{dobra2000bounds}.

%In this section we explore Directed Acyclic Graphs (DAGs) as a tool for finding the bounds on missing cell counts in relation to the results of the previous sections. 
A DAG
$\mathcal{G}=\{\mathcal{V},\mathcal{E}\}$ consists of a set of
nodes $V=\{v_{1},...,v_{r}\}$ and a set of directed edges,
$(v_{i}, v_{j})\in E$, that link the ordered pairs of distinct
nodes $v_i$ ({\it the parent}), and $v_j$ ({\it the child}) in $V$, and there are no {\it cycles}.  
%The vertex $v_i$ is called the {\it parent}, and $v_j$ the {\it child}. %Let $pa(v_j)$ denote the {\it
%parent set} of $v_{j}$. 
%A DAG does not contain a {\it cycle} if
%there does not exist a sequence of distinct vertices $v_1,...,v_m$
%for which $(v_l,v_{l+1})\in E$ for each $i = 1,...,r-1$ that
%begins and ends in the same node. %For more detailed definitions
%and properties of graphs, see \citet{Lauritzen1996}, \cite{Whittaker1990},and \citet{Edwards2000}. 
A DAG satisfies the {\it Wermuth condition}
(\citet{Whittaker1990}) or is {\it perfect}
(\citet{Lauritzen1996}) if no subgraph has {\it colliders}, that
is, if no child has parents that are not directly connected. A graph
$\mathcal{G}^{u}=\{\mathcal{V},\mathcal{E}^{u}\}$ is called {\it
undirected} if the edges are undirected (lines), that is, if
$(v_i, v_j) \in E$ then $(v_j,v_i)\in E$. A {\it moral graph}
$\mathcal{G}^{m}=\{\mathcal{V},\mathcal{E}^{m}\}$ is the
undirected graph on the same vertex set as $\mathcal{G}$ and with
the same edge set $\mathcal{E}$ including all edges that would be
necessary to eliminate forbidden Wermuth configurations in $\mathcal{G}$.% (\citet{Whittaker1990, Lauritzen1996}).

%Let $\{X_1,X_2,...,X_k\}$ be a set of random variables with the
%joint distribution $f(X_{1},X_{2},...,X_{k})$. 
If the random
variables $X_1,...,X_r$ are nodes of the graph
$\mathcal{G}$, then the graph
represents dependencies among these variables. More
specifically, $\mathcal{G}$ defines
the set of probability distributions over the sample space that
obeys the {\it directed Markov properties} and factorizes the joint distribution, %according to
\begin{equation}
f(x_{1},x_{2},...,x_{r}) =  \prod_{x\in\mathcal{V}}f(x|pa(x))=f(x_{1})f(x_{2}|x_{1})...f(x_{r}|x_{r-1},x_{n-2},...,x_{1}).
\end{equation}
%While in undirected graphs the missing edge represents the conditional
%independence of two variables given {\it all} other variables, in
%DAGs the missing edge represents the conditional independence given
%{\it all prior} variables. If there are no prior variables, this
%implies {\it marginal} independence in DAGs.
%For more on graphical models see \cite{Lauritzen1996}, and for their use in computing Markov bases and for calculating bounds for disclosure risk assessment given marginals only, see \cite{dobra2003markov} and  \cite{dobra2000bounds}.
There are many cases when the joint distribution over the contingency
table has a graphical representation. In some of these cases, a
set of conditionals and marginals will factor the joint according
to a DAG representation. Given such a set that also satisfies the Wermuth
condition, there is an equivalent undirected graph representation
of the same set. In that case, the generalized Problem~\ref{mainProb}  is reduced to one of knowing a set of marginals, and the bounds are those given by \cite{dobra2000bounds, dobra:fien:2009}. The following results hold for any $r$-way table.

\begin{theorem}\label{th:DAGmargin}
Let $ \mathcal{T}$ be a set of conditional and marginal distributions
inducing bounds on the cell entries. Let $\mathcal{G}$ be a DAG,
and $\mathcal{G}^{u}$ the undirected graph associated with $ \mathcal{T}$.
When $\mathcal{G}$ satisfies the Wermuth condition, the bounds
imposed by $ \mathcal{T}$ reduce to the bounds imposed by a set of
marginals associated with $\mathcal{G}^{u}$.
\end{theorem}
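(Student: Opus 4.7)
The plan is to exploit the factorization of the joint distribution induced by the DAG $\mathcal{G}$, and then translate conditional information into marginal information clique-by-clique using the tools developed earlier in the paper. I would first recall that under the directed Markov property,
\begin{equation*}
f(x_1,\dots,x_r) \;=\; \prod_{v\in\mathcal{V}} f(x_v \mid pa(x_v)).
\end{equation*}
When $\mathcal{G}$ satisfies the Wermuth condition, moralization adds no edges, so $\mathcal{G}^u$ has the same skeleton as $\mathcal{G}$ and each family $\{v\}\cup pa(v)$ is a clique of $\mathcal{G}^u$. In particular, $\mathcal{G}^u$ is decomposable (triangulated, chordal) and the family cliques form a perfect sequence that covers the vertex set.

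Next, I would rewrite each factor $f(x_v\mid pa(x_v)) = f(x_v,pa(x_v))/f(pa(x_v))$, so the conditional information encoded in $\mathcal{T}$ is equivalent, at the distributional level, to specifying the family marginals $\{f(x_v,pa(x_v))\}_{v\in\mathcal{V}}$ together with the separator marginals $f(pa(x_v))$; but the latter are themselves sub-marginals of the former, so they do not carry extra information. The corresponding set of integer marginal tables is precisely the set associated with the cliques of $\mathcal{G}^u$, which is the object used in \cite{dobra2000bounds} and \cite{dobra:fien:2009} to derive clique-based bounds.

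The third step is to promote this distributional equivalence to an equivalence of fibers in the integer lattice. Here I would invoke Theorem~\ref{thm:diophantineSolutions=Marginals} and Corollary~\ref{cor:tableSpaceDecompositionByMarginals} applied one clique at a time: the conditional $P(X_v\mid pa(X_v))$ together with the total count of the parent configuration $n_{pa(v)}$ determines the family marginal count $n_{v,pa(v)}$ exactly (up to the rounding/divisibility conditions handled by the Diophantine equation). Processing the cliques in the perfect ordering of $\mathcal{G}^u$, each newly encountered family marginal is pinned down by the previously determined separator marginal plus the corresponding conditional in $\mathcal{T}$. Consequently, the fiber $\mathcal{F}_{\mathcal{T}}$ coincides with the fiber determined by the family marginals of $\mathcal{G}^u$, and hence the cell bounds induced by $\mathcal{T}$ coincide with the marginal bounds associated with $\mathcal{G}^u$.

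The main obstacle, I expect, is the integrality step in the third paragraph: the distributional equivalence between ``family conditionals'' and ``family marginals'' is essentially immediate from the chain rule and moralization, but showing that the integer fibers match requires the running totals $n_{pa(v)}$ to be uniquely determined by the previous stage of the decomposition. If some Diophantine equation at a particular clique admits multiple solutions, one must argue that these solutions are already accounted for as distinct marginal fibers in the Dobra--Fienberg clique decomposition, so that the bounds (which are taken over all consistent realizations) still agree. This is handled cleanly by the decomposition of Corollary~\ref{cor:tableSpaceDecompositionByMarginals}, but writing it out carefully for an arbitrary perfect ordering of the cliques of $\mathcal{G}^u$ is the technically delicate part of the argument.
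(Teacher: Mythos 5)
Your first two steps coincide with the paper's own proof: the paper argues exactly that, under the Wermuth condition, the moral graph $\mathcal{G}^m$ and the undirected graph $\mathcal{G}^u$ have the same edge set, so by the Markov theorem for directed independence graphs the DAG carries the same information about the joint as $\mathcal{G}^u$, and the bounds therefore reduce to the clique-marginal bounds of \cite{dobra2000bounds, dobra:fien:2009}. The paper stops at this distributional level; you go further and try to promote the equivalence to the level of integer fibers via Theorem~\ref{thm:diophantineSolutions=Marginals} and Corollary~\ref{cor:tableSpaceDecompositionByMarginals}. That extra step is where you identify the real difficulty, and you identify it correctly --- but your proposed resolution of it is wrong.

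Specifically, your claim that when a clique-level Diophantine equation admits multiple solutions ``the bounds (which are taken over all consistent realizations) still agree'' contradicts the paper's own Section~\ref{subsec:exdag}: with $\mathcal{T}=\{P(B|A),P(C|A),N=240\}$ there are $361$ possible $[A]$ margins, the fiber given the conditionals is strictly larger than $\mathcal{F}_{AB,AC}$ ($3066315$ versus $13671$ tables), and the paper states explicitly that the bounds on the cell entries are then different and the conclusion of Theorem~\ref{th:DAGmargin} fails. The union over all Diophantine solutions in Corollary~\ref{cor:tableSpaceDecompositionByMarginals} ranges over \emph{different} marginal tables, whereas the Dobra--Fienberg bounds are computed for \emph{one fixed} set of clique marginals, so the two sets of realizations do not match and the extremal cell values can differ. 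The way the theorem is actually meant to be read is that $\mathcal{T}$ is a set of conditional \emph{and marginal} distributions whose marginal components (e.g.\ $P(A)$ together with $N$ in $\mathcal{T}=\{P(B|A),P(C|A),P(A),N\}$) pin down each conditioning margin uniquely --- equivalently, each clique-level Diophantine equation has a unique positive solution in the sense of Corollary~\ref{cor:equalspace}. If you add that hypothesis, your sequential argument along a perfect ordering of the cliques goes through and in fact gives a more explicit proof than the paper's; without it, the third paragraph of your argument does not close.
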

\begin{proof}
This result follows from well-known properties of a DAG and more
specifically from the Markov theorem for directed independence
graphs (\citet{Whittaker1990, Lauritzen1996}).  The theorem states
that the DAG possesses the Markov properties of its associated moral
graph. Therefore, there is an equivalence of the set of edges for
$\mathcal{G}^m$ and $\mathcal{G}^u$.  The directed edges in the
DAG carry independence statement  information on a sequence of
marginal distributions, while the undirected graph describes the
independence statements on a single conditional.  Since the edge
sets are equivalent, the DAG then gives the equivalent information
on the joint as its associated undirected graph.
\end{proof}

\begin{corollary}
Let $\mathcal{G}^{m}$ be the moral graph associated with
$\mathcal{G}$. If $\mathcal{G}^{m}=\mathcal{G}^{u}$, then the
bounds induced by a set $ \mathcal{T}$ are equivalent to the bounds induced
by the set of marginals associated with $\mathcal{G}^{u}$.
\end{corollary}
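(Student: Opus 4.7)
The plan is to reduce this corollary directly to Theorem~\ref{th:DAGmargin} by showing that the hypothesis $\mathcal{G}^{m}=\mathcal{G}^{u}$ is equivalent to $\mathcal{G}$ satisfying the Wermuth condition. Once that equivalence is in hand, the conclusion is immediate by invoking the previous theorem.

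First, I would unpack the two graphs in question. The undirected graph $\mathcal{G}^{u}=\{\mathcal{V},\mathcal{E}^{u}\}$ is obtained from $\mathcal{G}$ simply by ignoring the orientation of each directed edge, so its edge set is in bijection with $\mathcal{E}$. The moral graph $\mathcal{G}^{m}=\{\mathcal{V},\mathcal{E}^{m}\}$ is obtained by first adding an (undirected) edge between every pair of parents that share a common child and then dropping orientations, so $\mathcal{E}^{u}\subseteq \mathcal{E}^{m}$, with equality precisely when no additional ``marrying'' edges need be introduced.

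Next I would argue the equivalence: $\mathcal{G}^{m}=\mathcal{G}^{u}$ if and only if for every child $v_j$ in $\mathcal{G}$, every pair of parents of $v_j$ is already connected by an edge, which is precisely the statement that $\mathcal{G}$ has no colliders in any subgraph, i.e., $\mathcal{G}$ is perfect in the sense of Lauritzen or satisfies the Wermuth condition of Whittaker, as defined earlier in the paper. Thus under the hypothesis of the corollary, $\mathcal{G}$ satisfies the Wermuth condition, and Theorem~\ref{th:DAGmargin} applies: the bounds induced by $\mathcal{T}$ reduce to the bounds induced by the set of marginals associated with $\mathcal{G}^{u}$.

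I expect no substantive obstacle here, since the work has already been done in Theorem~\ref{th:DAGmargin} and in the standard characterization of moralization; the only thing to be careful about is making the definitional equivalence ``no added moralizing edges $\Leftrightarrow$ Wermuth condition'' fully explicit, rather than treating it as folklore, so that the reduction to the theorem is watertight.
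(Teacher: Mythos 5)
Your proposal is correct and matches the paper's (implicit) argument: the paper states this corollary without a separate proof precisely because the hypothesis $\mathcal{G}^{m}=\mathcal{G}^{u}$ means no moralizing edges are needed, which by the paper's own definitions is exactly the Wermuth condition, so Theorem~\ref{th:DAGmargin} applies verbatim. Your only addition is to make that definitional equivalence explicit, which is a reasonable bit of bookkeeping rather than a new route.
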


\begin{comment}
What if an arbitrary set of conditionals and marginals has a DAG
representation but does not satisfy the Wermuth condition? One
special case is when we have a set of conditionals and marginals
that have a DAG representation and that satisfy the conditions of the
Gelman-Speed theorem.
As an example, consider a 3-way table with the following set of
conditionals and marginals,
$\mathcal{T}=\{P(x_{3}|x_{2},x_{1}), P(x_{2}), P(x_{1})\}.$

\begin{center}
\setlength{\unitlength}{1cm}
\begin{picture}(5,0.5)
$X_{1}$\put(0,0.2){\vector(2,0){2}}\put(2.2,0){$X_{3}$}\put(4.8,0.2){\vector(-2,0){2}}\put(5,0){$X_{2}$}
\end{picture}
\end{center}

The DAG in this case implies the marginal independence of $X_{2}$
and $X_{1}$. This is a special case of a collection defined by the
Gelman-Speed theorem that uniquely identifies the joint
distribution. Under these conditions, any of the following would
also uniquely identify the joint distribution: (1)
$P(x_{3}|x_{2},x_{1})$ and $P(x_{2}, x_{1})$, (2)
$P(x_{3},x_{2}|x_{1})$ and $P( x_{1})$, or (3)
$P(x_{3},x_{1}|x_{2})$ and $P(x_{2})$.
\end{comment}

An interesting link between bounds on cells in the contingency
tables, DAGs, and Markov bases is indicated by the next result.

\begin{corollary}\label{th:DAGMbasis}
Let $ \mathcal{T}$ be a set of conditional and marginal distributions. Let
$\mathcal{G}$ be a DAG  and $\mathcal{G}^{u}$ the undirected graph
associated with $ \mathcal{T}$. When $\mathcal{G}$ satisfies the Wermuth
condition, the Markov basis describing $ \mathcal{T}$ under the same ordering
is the same Markov basis induced by a set of marginals associated
with $\mathcal{G}^{u}$.
\end{corollary}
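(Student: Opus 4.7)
The plan is to mirror the proof of Theorem~\ref{th:DAGmargin}, lift it from the level of bounds to the level of the entire fiber, and then invoke the Diaconis--Sturmfels correspondence between Markov bases and toric ideals recalled in Section~\ref{subsec:MB}. First I would quote from the proof of Theorem~\ref{th:DAGmargin} that under the Wermuth condition the moral graph $\mathcal{G}^m$ coincides with the undirected graph $\mathcal{G}^u$, so the factor-sets $\{v\}\cup pa(v)$ appearing in the DAG factorization of the joint are precisely the cliques of $\mathcal{G}^u$. Consequently the linear constraints encoded by $\mathcal{T}$ and the linear constraints defining the $\mathcal{G}^u$-marginals cut out the same affine subspace and, over $\mathbb{Z}^{\mathcal{D}}$, carve out identical integer fibers once the cells of $\bf n$ are listed in a common order.

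Second, once equality of fibers is established the conclusion is immediate from the Diaconis--Sturmfels theorem quoted in Section~\ref{subsec:MB}: a Markov basis is a generating set for the toric ideal $I_T$ of the design matrix $T$, and two design matrices with the same integer kernel produce the same toric ideal and hence the same minimal Markov basis. The clause \emph{under the same ordering} in the statement is exactly what is needed so that the two kernels are literally equal rather than equal up to a permutation of coordinates, allowing the Markov moves to be transported verbatim from one design matrix to the other.

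The most delicate point is the first step, because in general a fiber defined by conditionals is strictly larger than the fiber defined by the associated margins, as reflected by Corollaries~\ref{cor:equalspace} and~\ref{cor:tableSpaceDecompositionByMarginals}: the Diophantine equation~(\ref{eq:diophantine}) can admit several positive integer solutions, producing several candidate margins and hence a non-trivial decomposition of $\mathcal{F}_{A|B}$. The Wermuth condition is what rules this out here, because the DAG factorization combined with the marginals already appearing in $\mathcal{T}$ pins down each implied $\{v\}\cup pa(v)$-marginal, so that factorwise Equation~(\ref{eq:diophantine}) has a unique solution. Corollary~\ref{cor:equalspace} then applies to each clique, and the conditional fiber collapses onto the marginal fiber associated with $\mathcal{G}^u$, which is all that is required to complete the argument.
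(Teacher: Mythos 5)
Your overall architecture is plausible, but it diverges from the paper's argument and, more importantly, its first step rests on a claim that the paper's own examples refute. The paper proves this corollary in one line by appealing to Corollary~\ref{cor:MBTwoSubsetsofMoves}: the Markov basis for the constraints imposed by $\mathcal{T}$ splits into margin-fixing moves and margin-changing moves, and under the reduction of Theorem~\ref{th:DAGmargin} the basis that describes $\mathcal{T}$ is the margin-fixing part, i.e.\ the Markov basis of the marginals attached to $\mathcal{G}^u$. You instead try to prove equality of the integer fibers (equivalently, of the integer kernels of the two design matrices) and then invoke the Diaconis--Sturmfels correspondence. The second step is fine as far as it goes; the first step is where the trouble lies.

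The gap is your assertion that the Wermuth condition ``pins down each implied $\{v\}\cup pa(v)$-marginal, so that factorwise Equation~(\ref{eq:diophantine}) has a unique solution.'' The Wermuth condition is a purely graphical hypothesis (no colliders), whereas the number of positive integer solutions of Equation~(\ref{eq:diophantine}) is an arithmetic property of $N$ and the least common multiples $m_j$ of the denominators of the observed conditional frequencies; the two are independent. Section~\ref{subsec:exdag} supplies a counterexample to your claim: for $\mathcal{T}=\{P(B|A),P(C|A),N\}$ with $N=240$, the DAG $B\leftarrow A\rightarrow C$ satisfies the Wermuth condition, yet there are $361$ possible $[A]$ margins, $|\mathcal{F}_{t}|=3066315>|\mathcal{F}_{AB,AC}|=13671$, and the paper explicitly notes that the conclusion of Corollary~\ref{th:DAGMbasis} does not hold there. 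Relatedly, your statement that the constraints encoded by $\mathcal{T}$ and those defining the $\mathcal{G}^u$-marginals ``cut out the same affine subspace'' is false in general: the conditional design matrix has a strictly smaller row space, which is precisely why margin-changing Markov moves exist at all. What your argument actually needs is the additional hypothesis, implicit in the paper's own use of this corollary, that $\mathcal{T}$ determines the factor marginals uniquely --- e.g.\ because the root marginals are included in $\mathcal{T}$, or because each relevant Diophantine equation has a single positive solution in the sense of Corollary~\ref{cor:equalspace}. With that hypothesis stated explicitly, your route via equality of kernels and toric ideals would go through.
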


\begin{proof}
The claim follows from Corollary~\ref{cor:MBTwoSubsetsofMoves}. 
\end{proof}

%We demonstrate this and other results in the next section.
It is possible that similar results, with discrete random variables, could be derived for the chain graphs and ancestral graphical models (e.g., \cite{richardson2002}) which are generalization of the directed and undirected graphs. This is an interesting topic for future research. 

%%%%%%%%%%%%%%%%%%%%%%%%%%%%%%%%%%%%%%%%%%%%%%%%%
%%%%%%%%%%%%EXAMPLES %%%%%%%%%%%%%%%%%%%%%
%%%%%%%%%%%%%%%%%%%%%%%%%%%%%%%%%%%%%%%%%%
\section{Examples}\label{sec:examples}
\vspace{-0.1in}
In this section we illustrate the results described in the preceding sections through  analysis of a series of simple contingency tables.
We show how to use our initial R~\citep{R} implementation of the formulas from Sections~\ref{sec:mathOfTableSpace} and~\ref{sec:CellMarkov}. We also perform our analyses using the well-establish and free algebraic software LattE macchiato \citep{latte} which relies on an implementation of the Barvinok's algorithm \citep{barvinok2010approximation} for counting and detecting lattice points inside convex polytopes. %It is also used for finding solutions to integer programs.
In statistical literature, LattE has been mostly used for counting the number of tables given the margins.

\vspace{-0.1in}
\subsection{A $2\times 2\times 2$ Example}\label{subsec:mp3}
\vspace{-0.1in}
Consider a fictitious  $2\times 2 \times 2$ table that cross-classifies a randomly chosen sample of 50 college students by their {\em Gender}, illegal {\em Downloading} of MP3 files, and the dorm {\em Building} they live in; see counts in Table~\ref{tab:mp3_bgd}. We  use shorthand $G$ for $Gender$, $D$ for $Downloading$, and $B$ for $Building$ variable.
\begin{table}[htdp]
\small
\caption{A $2\times2\times2$ table of counts of  illegal MP3 downloading by gender and a residing building. The value in the brackets are linear relaxation bounds and sharp integer bounds given released conditional $[D|G]$ and marginal $[DG]$, respectively.}
\begin{center}
\begin{tabular}{cc|cc|c}
&&Download&&\\
Building&Gender&Yes&No&Total\\\hline
I&Male&8 [0,29.4] \color{blue}[0,27] \color{red}[0,15] & 4 [0,19.6] \color{blue}[0,18] \color{red}[0.10]&12\\
I&Female&2 [0,9.8] \color{blue}[0,9] \color{red}[0,5] & 9 [0,39.2] \color{blue}[0,36] \color{red} [0,20]&11\\
II&Male&7 [0,29.4] \color{blue}[0,27] \color{red}[0,15] & 6 [0,19.6] \color{blue}[0,18] \color{red}[0.10]&13\\
II&Female&3 [0,9.8] \color{blue}[0,9] \color{red}[0,5] &11 [0,39.2] \color{blue}[0,36] \color{red} [0,20]&14\\\hline
&Total&20&30&50\\
\end{tabular}
\end{center}
\label{tab:mp3_bgd}
\end{table}

\begin{table}[htdp]
\small
\parbox{.45\linewidth}{
\centering
%\begin{table}[htdp]
%\caption{$[Gender, Download]$ Marginal table of illegal MP3 downloading, and integer bounds given released $[Download|Gender]$ and $N=50$.}
\caption{$[GD]$ Marginal table of illegal MP3 downloading, and integer bounds given released $[D|G]$ and $N=50$.}
\begin{center}
\begin{tabular}{c|cc|c}
&Download&&\\
Gender&Yes&No&Total\\\hline
Male&15 [3,27] &10 [2,18] &25\\
Female&5 [1,9] &20 [4,36] &25\\\hline
Total&20&30&50\\
\end{tabular}
\end{center}
\label{tab:mp3_gd}
%\caption{$[GD]$ Marginal table of illegal MP3 downloading, and integer bounds given released $[D|G]$ and $N=50$.}
%\end{table}
}
\hfill
%We compare this space with the space of tables $\mathcal F_{D|G}$ defined by the grand total $50$ and the small conditional $P(Download|Gender)$, e.g., see Table~\ref{tab:mp3_d|g}. The linear relaxation bounds are given in black brackets and the sharp integer bounds in blue brackets in Table~\ref{tab:mp3_bgd}.
%\begin{table}[htdp]
%\caption{$[Download|Gender]$ Table of conditional probabilities with [rounded probability].}
\parbox{.45\linewidth}{
\centering
\caption{$[D|G]$ Table of conditional probabilities with reduced fractions and [rounded probability].}
\begin{center}
\begin{tabular}{c|cc}
&Download&\\
Gender&Yes&No\\\hline
Male&$\frac{15}{25}=\frac{3}{5}$ $[0.6]$&$\frac{10}{25}=\frac{2}{5}$ $[0.4]$\\
Female&$\frac{5}{25}=\frac{1}{5}$ $[0.2]$&$\frac{20}{25}=\frac{4}{5}$ $[0.8]$\\\hline
Total&20&30\\
\end{tabular}
\end{center}
\label{tab:mp3_d|g}
}
\end{table}

The survey administrator has the full information on the $[BGD]$ table, but due to confidential nature of the data, would like to consider releasing only partial information to public such as the marginal counts %Consider  a case where the only information we have about the original $2\times 2\times 2$ table is the  marginal table of counts 
$[DG]$ as in Table~\ref{tab:mp3_gd} or the grand total $50$ and the small conditional $P(Download|Gender)$ as in Table~\ref{tab:mp3_d|g}. This requires comparison of the space of tables $\mathcal F_{DG}$, which based on Lemma~\ref{lm:tableCountFormula} has $16\times 11\times 6 \times 21=22176$ possible $[BGD]$ tables, with the space of tables $\mathcal F_{D|G}$. %,with the sharp integer bounds given in red brackets in Table~\ref{tab:mp3_bgd}. We compare this space with the space of tables $\mathcal F_{D|G}$, and the other bounds in Table~\ref{tab:mp3_bgd}, that is the linear relaxation and sharp integer bounds given in black and blue brackets, respectively.  %where the linear relaxation bounds are given in black brackets and the sharp integer bounds in blue brackets in Table~\ref{tab:mp3_bgd}.
%$[Download, Gender]$; e.g., Table~\ref{tab:mp3_gd} or the grand total $50$ and the small conditional $P(Download|Gender)$, e.g., see Table~\ref{tab:mp3_d|g}. Based on Lemma~\ref{lm:tableCountFormula}, the space of tables $\mathcal F_{DG}$ has $16\times 11\times 6 \times 21=22176$ possible 3-way tables. The sharp integer bounds are given in red brackets in Table~\ref{tab:mp3_bgd}.

The reference set $\mathcal F_{D|G}$ consists of tables that are solutions to the following:

\begin{footnotesize}
\[
\left\{ \begin{array}{l}
 \left[ {\begin{array}{*{20}c}
   1 & 1 & 1 & 1 & 1 & 1 & 1 & 1  \\
   2 & { - 3} & {} &{} & {2} & {-3} & {} & {}   \\
   {} & {} & {4} & {-1} & {} & {} & 4 & { - 1}   \\
  \end{array}} \right]\mathbf{n} = \left[ {\begin{array}{*{20}c}
   {50}  \\
   0  \\
   0  \\
\end{array}} \right] \\
 n_1  + n_2  + n_5  + n_6  > 0 \\
 n_3  + n_4  + n_7  + n_8  > 0 \\
 All\quad n_i's \quad are \quad nonnegative \quad integers
 \end{array} \right\}
\]
\end{footnotesize}

This is part of a 5-dimensional lattice inside the $\mathcal{R}^2$. Then equation (\ref{eq:diophantine}) of Theorem
\ref{thm:diophantineSolutions=Marginals} for this example is $5x_1+5x_2=50,$ and it has 9 positive integer solutions: $\{(x_1=i, x_2=10-i)| 1\leq i\leq9\}$. Thus, there are $9$ different $[DG]$ marginals, 
which, by Theorem \ref{cor:equalspace}, means that the space of
tables given the small conditional $[D|G]$ and the
grand total is different
from the space of tables given the corresponding marginal counts. In fact, the space is larger:
$|\mathcal F_{D|G}| > |\mathcal F_{DG}|$.  More specifically, Corollary \ref{cor:exactTotalTableCount}
for $m=9$ provides the table count:
 $ |\mathcal F_{D|G}| = \sum_{m=1}^9 |\mathcal F_{DG_m}|=128676$. In R, we invoke function {\em tablecount}$(M,2)$ where $M$ is any one of 9 possible marginal tables $[DG]$. Notice that this formulation does not allow any row of $[G]$ to have a total of zero counts. If such tables were to be allowed, then the total number of possible 3-way tables would be $128676+651+451=129778$ where $651$ and $451$ are the numbers of possible 3-way tables given the $[DG]$ when one of the rows of $[G]$ is equal to zero.
 %the latter two numbers before the equals sign present the number of possible 3-way tables given the $[DG]$ marginal where one of the rows of $[G]$ is equal to zero.

To approximate the number of marginal tables $[DG]$, one can use the formula from equation (\ref{eq:intvol}) in Proposition~\ref{prop:approxNumOfSolnsDiophantine-viaAlgebra} to count the number of corresponding
solutions to the Diophantine equation as $\frac{50\text{gcd}(5,5)}{5\times5}=10$. Then, we can use the integral formula from Corollary \ref{cor:approxTableCount-Integral}, which could be evaluated, say, using Maple, to estimate the size of the total table space given the conditionals as %Note that Proposition~\ref{prop:approxNumOfSolnsDiophantine-viaAlgebra} gives an approximation of the number of marginal tables $[DG]$ by $\frac{50\text{gcd}(5,5)}{5\times5}=10.$
%Then the Corollary~\ref{cor:approxTableCount-Integral} gives an approximation of the total number of 3-way tables by 
$\frac{\text{gcd}(5,5)}{5}\int\limits_0^{10} {(3x+1)(2x+1)(10-x+1)(40-4x+1)}=129676.7$.

%To approximate the number of marginal tables, one can use the formula from the second part of Proposition~\ref{prop:approxNumOfSolnsDiophantine-viaAlgebra} to count the number of corresponding
%solutions to the Diophantine equation. Then, we can use the integral formula from Corollary \ref{cor:approxTableCount-Integral}, which could be evaluated, say, using Maple, to estimate the size of the total table space given the conditionals. Note that Proposition~\ref{prop:approxNumOfSolnsDiophantine-viaAlgebra} gives an approximation of the number of marginal tables $[DG]$ by $\frac{50\text{gcd}(5,5)}{5\times5}=10.$
%Then the Corollary~\ref{cor:approxTableCount-Integral} gives an approximation of the total number of 3-way tables by $\frac{\text{gcd}(5,5)}{5}\int\limits_0^{10} {(3x+1)(2x+1)(10-x+1)(40-4x+1)}=129676.7$.

Since more than one possible margin is consistent with the given conditional and grand total, clearly $\mathcal F_{DG}$ is strictly contained in $\mathcal F_{D|G}$. This can also be seen by computing the cell bounds on the cell entries of $[BDG]$ contingency table. In Table~\ref{tab:mp3_bgd}, given $\mathcal F_{D|G}$,  the linear relaxation cell bounds and the exact integer bounds are given in the black and blue brackets, respectively.  Given $\mathcal F_{DG}$,  the exact cell bounds are in red brackets. The idea is that the wider bounds offer more protection. These bounds are obtained by direct optimization for each given constraint.  However, the results of Section~\ref{sec:CellMarkov} show a computational shortcut to obtaining bounds given $[D|G]$ and $N=50$ by using already established results on bounds of cell entries given the marginals. First, by Theorem~\ref{th:boundsonmargin} we obtain bounds on the missing margin $[DG]$ (see Table~\ref{tab:mp3_gd}). Next, we combine this with a well-known fact that given one marginal $s_{ij+}$, the bounds  on each cell entry of the 3-way table are $0\leq n_{ijk}\leq s_{ij+}$. Thus, the bounds for $n_{ijk}$ are between 0 and the upper bound found for the missing marginal table. For example, for the cell $(1,1,1)$, the $3\leq s_{11+} \leq 27$, and $0\leq n_{111} \leq 27$; these are the bounds given in the blue brackets in Table~\ref{tab:mp3_bgd}.

It has been observed in the literature already that the above-described bounds have gaps. That is, not all values within the interval are possible.
This observation 
is particularly important for assessing disclosure risk with contingency tables. By enumerating all possible marginal tables, we learn both the number of all possible $r$-way tables, and the values in the cell counts of those tables.  We can obtain such tables quickly by using the {\em solvequick()} function. For example, $solvequick(c(5,5),50)$ gives a vector of all possible $G$ margins that we conditioned on in $[D|G]$. To get $[DG]$ margins, compute $m_j\times b\times c_{ij}.$ 
%For example, $b=solvequick(c(5,5),50)$ is a vector of all possible $B$ margins that we conditioned on in $[A|B]$; in our case $B=Gender$.   To get $[DG]$ margins, compute $m_j\times b\times c_{ij}.$ 

Next, we calculate a Markov basis for fixed $[D|G]$ using $4ti2$. If Conjecture~\ref{prop:conjecture} is true, then so is Corollary~\ref{cor:MBMinimalNumberofElements}, and there should be $5=|G|-1+(|B|-1)\times |G| \times |D|=1+1\times2\times2$ Markov moves. 
Our computation finds exactly $5$ moves: \begin{footnotesize}
\[
 \left( {\begin{array}{*{20}c}
     3 & { 2} & {-1} &{-4}&{0} & {0} & {0} & {0}   \\
     {1} & {0} & {0} & {0} & {-1} & {0} & {0} & {0}   \\
   {0} & {1} & {0} & {0} & {0} & {-1} & {0} & {0}   \\
 {0} & {0} & {1} & {0} & {0} & {0} & {-1} & {0}   \\
 {0} & {0} & {0} & {1} & {0} & {0} & {0} & {-1}   \\

  \end{array}} \right).
\]
\end{footnotesize}

In accordance with Corollary~\ref{cor:MBTwoSubsetsofMoves}, the last 4 moves correspond to a set of moves that fix the $[DG]$ margin, while the first move changes the margin $[DG]$, but keeps the $N$ fixed. From the first element, $n_1^{3}n_2^{2}-n_5^{1}n_6^{4}$, by summing the exponents in each monomial, we can deduce exactly the amount by which a count in each level of the margin we condition on changes. In this example, each marginal count of $[G]$ changes by a count of $5$. Thus, with the sample size $N=50$, the upper bound for the solution to equation~(\ref{eq:diophantine}) for the number of possible marginals $[G]$, and thus of $[DG]$, is $10$. 

A related example, providing more details and implications of when a Diophantine equation has only one solution is available in the supplementary documents at \url{http://www.stat.psu.edu/~sesa/cctable}. 
\subsection{A $3\times 2\times 2$ table with zero counts}

In this section, we apply our derived results to a $3\times2\times2$ table (see Table~\ref{ex:322table}) with zero counts, and show the convergence of exact and approximate results.

\small
\begin{table}[htdp]
\caption{A $3\times2\times2$ Table}
\begin{center}
\begin{tabular}{cc|cc|c}
 &  & C=1 & C=2 & Total\\
\hline
A=1 & B=1& 10 & 20 & 30  \\
A=1 & B=2 & 10 & 20 & 30\\
A=2 & B=1 & 20 & 0 & 20\\
A=2 & B=2 & 0 & 40 & 40 \\
A=3 & B=1 & 0 & 30 & 30 \\
A=3 & B=2 & 30 & 60 & 90\\
\hline
& Total & 70 & 170 & 240\\
\end{tabular}
\end{center}
\label{ex:322table}
\end{table}

\normalsize
\subsubsection{Small conditional $B|A$ and $N$}
Consider that we do not observe the original table, and the only available information is $\mathcal{T}=\{Pr(B|A), N=240\}$; the sample values are given in Table~\ref{ex:32table}. 
\vspace{-0.3in}
\begin{table}[htdp]
\caption{Left panel: Observed counts of the $[AB]$ marginal table, and notation for when those counts are missing. Right panel: Observed conditional probabilities $[B|A]$ based on values in Table~\ref{ex:322table}.}
\begin{center}
\begin{tabular}{l|l|l}
& B=1 & B=2  \\
\hline A=1 & 30 [x] & 30 [x] \\
A=2 & 20 [y]  & 40 [2y] \\
A=3 & 30 [z]  & 90 [3z] \\
\hline
\end{tabular}
\hspace{0.5in}
\begin{tabular}{l|l|l}
& B=1 & B=2  \\
\hline A=1 & 1/2 & 1/2 \\
A=2 & 1/3  & 2/3 \\
A=3 & 1/4  & 3/4 \\
\hline
\end{tabular}
\end{center}
\label{ex:32table}
\end{table}

By Theorem~\ref{thm:diophantineSolutions=Marginals}, the linear Diophantine equation that characterizes all possible missing [$AB$] margins is
\begin{equation} \label{eq:ex322dioph}
2x+3y+4z=240.
\end{equation}
Using our R code, e.g., $solvecount(c(2,3,4),240),$ we learn that there are $1141$ possible $A$ marginals consistent with the provided information. Since the triplets $(x,y,z)$ are in $1$-to-$1$ correspondence to $[AB]$ margins (see Table~\ref{ex:32table}), there are $1141$ missing $[AB]$ marginals consistent with the provided information. Furthermore, {\em solvequick}$(c(2,3,4),240)$ lists all positive  integer solutions to Equation (\ref{eq:ex322dioph}), and from there we easily obtain all corresponding $[AB]$ margins.

We are ultimately interested in finding all possible 3-way tables consistent with given information, i.e, solutions to the following system

\begin{footnotesize}
\[
\left\{ \begin{array}{l}
 \left[ {\begin{array}{*{20}c}
   1 & 1 & 1 & 1 & 1 & 1 & 1 & 1 & 1 & 1 & 1 & 1  \\
   1 & { - 1} & {1} & {-1} & {} & {} & {} & {} & {} & {} & {} & {}  \\
   {} & {} & {} & {} & {2} & {-1} & {2} & {-1} & {} & { } & {} & {}  \\
   {} & {} & {} & {} & {} & {} & {} & {} & {3} & {-1} & 3 & { - 1}  \\
\end{array}} \right]X = \left[ {\begin{array}{*{20}c}
   {240}  \\
   0  \\
   0  \\
   0  \\
\end{array}} \right] \\
 n_1  + n_2  + n_3  + n_4  > 0 \\
 n_5 + n_6  + n_7  + n_8  > 0 \\
 n_9  + n_{10}  + n_{11}  + n_{12}  > 0 \\
 All\quad n_i's \quad are \quad nonnegative \quad integers
 \end{array} \right\},
\]
\end{footnotesize}

which is part of a 8-dimensional lattice inside the $\mathcal{R}^{12}$.  The exact number of possible 3-way tables can be obtained by Corollary~\ref{cor:exactTotalTableCount},  $ |\mathcal F_{B|A}| = \sum_{m=1}^{1141} |\mathcal F_{AB_m}|.$ In R, we invoke $format(tablecount(M,2),digits=22)$, which gives $1187848498271$ possible $[ABC]$ contingency tables. 

Next, we demonstrate in a little more detail and following the proof of Proposition~\ref{prop:approxNumOfSolnsDiophantine-viaAlgebra}, how to set up the integrals to calculate the approximate number of solutions. 
Recall that a marginal table [$AB$] corresponds to a triple
$(x,y,z)$. Note that $z=(240-2x-3y)/4$.
Thus, for each
marginal table, %(see Table~\ref{tab:abmargin}),
the number of possible tables that have this margin is $$(x+1)^2(y+1)(2y+1)(\frac{240-2x-3y}{4}+1)(3\frac{240-2x-3y}{4}+1).$$ After summing over all possible $(x,y)$, we get the count of
all possible $[ABC]$ tables:
$$\sum_{(x,y)\in\mathcal{M}}{(x+1)^2(y+1)(2y+1)(\frac{240-2x-3y}{4}+1)(3\frac{240-2x-3y}{4}+1)}$$
where $\mathcal{M}$ is the projection of all possible triple
$(x,y,z)$ onto the $xy$-plane. As discussed in the proof of Proposition~\ref{prop:approxNumOfSolnsDiophantine-viaAlgebra}, notice that $\mathcal{M}$ is a part of a lattice
whose unit cell has an area of 4/gcd(2,3,4). %, which is 4.
Thus, the number of possible solutions is approximately $1.188479935\times10^{12}$ by solving the following
\begin{small}
\[
\frac{1}{4} \int\limits_{\rm{0}}^{80} {\int\limits_0^{\frac{{240 -
3y}}{2}} {(x + 1)^2 (y + 1)(2y + 1)(\frac{{240 - 2x - 3y}}{4} + 1)(3
\cdot \frac{{240 - 2x - 3y}}{4} + 1)} } dxdy. 
\]
\end{small}
%this is about $1.188479935\times10^{12}$. % (computed by Maple).

%Our simulations show that
 The ratio of the exact solution to the approximate solution, for either counting the missing margin or the $r$-way table, is $1+O(1/N)$. For this example, we compute exact and approximate number of tables while varying  the grand total $N$. Table \ref{tab:marginratio} summarized the results for the missing marginal $[AB]$, and Table~\ref{tab:allratio} lists the exact number and approximate number of $[ABC]$ tables for different values of the total sample size. Numerical experiments show evidence that our approximation is sharper for equations with fewer unknowns, and/or when $N$ is
much larger than the coefficients in the equation.  For the small number of margins, the approximation does not work well. %This supports our earlier observation in Example 1 that for a small number of margins, the approximation does not work well. 

\begin{table}[htdp]
\caption{Exact and approximate number of missing marginal tables $[AB]$.}
\label{tab:marginratio}
\begin{tabular}{l|l|l}
\hline & Exact Count & Approximation  \\
\hline
N=24 &  $7$ & $12$ \\
N=240 & $1141$ & $1200$ \\
N=2400 & $119401$  & $ 120000$ \\
N=24000 & $11994001$  & $12000000
$ \\
\hline
\end{tabular}
\end{table}
\begin{table}[htdp]
\caption{Exact and approximate number of missing tables $[ABC]$.}
\label{tab:allratio}
\begin{tabular}{l|l|l}
\hline & Exact Count & Approximation  \\
\hline
N=24 &  $52937$ & $65150$ \\
N=240 & $1187848498271$ & $1.188479935\times10^{12}$ \\
N=2400 & $96999660430647444101$  & $ 9.699971869\times10^{19}$ \\
N=24000 & $9501190342113804461451781001$  &
$9.501190349\times10^{27}
$ \\
\hline
\end{tabular}
\end{table}

Next, we calculate a Markov basis for fixed $[B|A]$ using $4ti2$. According to Corollary~\ref{cor:MBMinimalNumberofElements}, there should be 8 elements in this basis. A Markov basis for this example is given below. In accordance to Corollary~\ref{cor:MBTwoSubsetsofMoves}, the last 6 moves correspond to a set of moves that fix the $[AB]$ margin, and the first two moves change the margin $[AB]$ while keeping $N$ fixed. As noted before, the sum of the exponents in the monomial tells us by how much the margin $[A]$ can change.
 
\begin{footnotesize}
\[
 \left( {\begin{array}{*{20}c}
-2 &-2& 0& 0& 0& 0& 0& 0& 1& 3& 0& 0\\
-3 &-3 &0 &0& 2 &4 &0 &0 &0 &0& 0& 0 \\
-1 &0 &1 &0 &0 &0 &0 &0 &0 &0 &0 &0 \\
0 &-1 &0 &1 &0 &0 &0 &0 &0 &0 &0 &0 \\
0 &0 &0 &0 &-1 &0 &1 &0 &0 &0 &0 &0 \\
0 &0 &0& 0 &0& -1 & 0& 1 &0 &0 &0 &0 \\
0 &0 &0 &0 &0 &0 &0 &0 &-1 &0& 1& 0 \\
0 &0& 0& 0& 0& 0& 0& 0& 0& -1& 0& 1 \\
\end{array}} \right)
\]
\end{footnotesize}

\subsubsection{Full conditional $A|BC$ and $N$}
\label{sec:example:conj-not-true-full-conditionals}
Before considering the release of other partial conditionals, we next demonstrate how some of our results also hold for the full conditional. First, if the only information available about the original table are the observed conditional rates, e.g., $[A|BC]$, and $N$,  as indicated in Section~\ref{subsec:MB}, we only need to solve a linear Diophantine equation to find the total number of possible 3-way tables, e.g., $$3x_1+4x_2+5x_3+6x_4=240.$$ We would typically count the number of possible solutions by setting up the full constraint matrix in LattE (e.g., see Appendix~\ref{sec:app-code}), but now we can simply apply $solvequick(c(3,4,5,6),240)$ in R. %However, now we can solve a much simpler equation in either LattE or R, e.g., $$3x_1+4x_2+5x_3+6x_4=240$$ using $solvequick(c(3,4,5,6),240)$ in R; for LattE code see Appendix~\ref{sec:app-code}. 
The number of possible tables is $5715$, which corresponds to the number of possible $[BC]$ margins. Second, notice that the $[A|BC]$ conditional rates have zero values, e.g., cell $(2,2,1)$ since the original cell has a zero count. However, the presence of zeros does not affect our computation since we are not conditioning on margins with zero counts. 

Last, the Markov basis has the following $4$ elements, all of which change the $[ABC]$ margin:   
 
\begin{footnotesize}
\[
 \left( {\begin{array}{*{20}c}
-2 &0 &0& 1 &-4 &0 &0 &3 &0 &0 &0 &2 \\
-3 &2 &1 &0 &-6 &0 &0 &0 &0 &3 &3 &0 \\
-2 &4 &-1& 0 &-4 &0 &0 &0 &0 &6 &-3 &0\\
-1 &-2& 2 &0 &-2 &0& 0 &0 &0 &-3& 6 &0 \\
\end{array}} \right).
\]
\end{footnotesize}

Conjecture~\ref{prop:conjecture} about the number of elements in the basis, however, does not hold here because we are using full conditionals, that is, $C=\emptyset$.
As supported by other examples, this conjecture seems true for \emph{small} conditionals only.

\subsubsection{Partial conditional $B|C$ and $N$}
Here we briefly consider a case where the missing marginal has more than two levels.  Let the available information be the sample size and the small conditional $[B|C]$ with the missing variable $[A]$ that has 3 levels. The following Diophantine equation captures the information preserved by the sample size and $[B|C]$: $$7x_1+17x_2=240.$$ In R, the $solvequick(c(7,17),240)$ function obtains  only two possible non-negative integer solutions, that is, only two possible marginal tables $[BC]$. Then, running {\em tablecount(M,3)}, where $M$ is one of the $[BC]$ margins, tells us that there are total of $6130182419416$ $[ABC]$ tables. In this example, it is easy to check via LattE that Corollary \ref{cor:exactTotalTableCount} holds. We compute the number of $ABC$ tables given each $BC$ margin, and see that their sum is equal to the number we obtained via the {\em tablecount()} function. According to this corollary,  $ |\mathcal F_{B|C}| = \sum_{m=1}^2 |\mathcal F_{BC_m}|=4179685045536+1950497373880=6130182419416.$ It should be noted here that the function {\em tablecount(M,3)} gives the total number of $[ABC]$ tables regardless of which compatible $[BC]$ margin we use. %Our conjectures and corollaries
The conjectures for the size of Markov bases hold here as well. We observe that there are 9 elements in a basis: 8 fix the $[BC]$ margin, and 1 changes the $[BC]$ margin. 

 \subsubsection{Combinations of partial conditionals and $N$}\label{subsec:exdag}
Let's assume that we observe $\mathcal{T}=\{P(B|A), P(C|A), P(A), N\},$ and recall that we assume that there exists a joint distribution from which we observed these compatible pieces. Then this collection can be graphically represented by a  DAG $\mathcal{G}$  that satisfies the Wermuth condition. This DAG and its corresponding undirected graph $\mathcal{G}^{u}$ are given in the picture below. By Theorem~\ref{th:DAGmargin} the bounds on the cell counts are the same as in the case of given margins $[AB]$ and $[AC]$. %; for bounds given marginals see \cite{dobra:fien:2009}. 
Based on Corollary~\ref{th:DAGMbasis}, the Markov bases will be the same, and so will the fibers $\mathcal{F}_{\tau}$ and $\mathcal{F}_{AB,AC}$. Note that these results capture the following  special case: if the model according to DAG is true, that is $B$ and $C$ are conditionally independent given $A$, then by the Wermuth condition we can uniquely specify the joint distribution, $P(A,B,C)=P(AB)P(AC):$
%th:DATMbasis

\begin{center}
\setlength{\unitlength}{1cm}
\begin{picture}(6,1)
$\mathcal{G}:$\put(0.4,0){$B$}\put(2.7,0.1){\vector(-2,0){2}}\put(2.9,0){$A$}\put(3.3,0.1){\vector(2,0){2}}\put(5.3,0){$C$}
\end{picture}
\end{center}

 \begin{center}
\setlength{\unitlength}{1cm}
\begin{picture}(6,1)
$\mathcal{G}^{u}:$\put(0.4,0){$B$}\put(2.7,0.1){\line(-2,0){2}}\put(2.9,0){$A$}\put(3.3,0.1){\line(2,0){2}}\put(5.3,0){$C$}
\end{picture}
\end{center}

Now assume that marginal $[A]$ is missing or hidden, and we only have partial information in the form of observed conditional frequencies $[B|A]$ and $[C|A],$ and sample size $N$. If there is a unique solution for the margin $[A]$, then there are unique two-way margins $[AB]$ and $[AC]$. By Theorem~\ref{th:DAGmargin} and Corollary~\ref{th:DAGMbasis} then this is equivalent to having information on two margins, and we can proceed by calculating the cell bounds, counting tables, and by sampling given the marginals.

Consider our running example from Table~\ref{ex:322table} but with $N=24$. Let $\mathcal{T}=\{P(B|A), P(C|A), N=24\},$ where the observed conditional values are the same as with $N=240$; e.g., for $P(B|A)$, see Table~\ref{ex:32table}. By Theorem~\ref{thm:diophantineSolutions=Marginals}, the equation that characterizes the missing marginal $[A]$ and thus $[AB]$ for $[B|A]$ is
\begin{equation}\label{eq:exb|a}
2x+3y+4z=24.
\end{equation}
Based on $solvecount(c(2,3,4),24)$, we learn that there are 7 possible $[A]$ margins. Furthermore, there are $52937$ possible 3-way $[ABC]$ tables.
%given in equation\ref{ex:exb|a},
The linear Diophantine equation that  characterizes the missing marginal $[A]$ and thus $[AC]$ based on knowledge of $[C|A[$ is
%given in equation~\ref{eq:exc|a}
\begin{equation}\label{eq:exc|a}
3x+3y+4z=24,
\end{equation}
and from the running $solvecount(c(3,3,4),24)$, we learn that there are 3 possible $A$ margins. There are $22440$ possible 3-way $ABC$ tables.

We are interested in the intersection of the two solution spaces. Using our function $intersect()$ in R, we learn that there is only one $[A]$ that satisfies both equations, and it takes values $(6, 6,12)$. Since there is only one $[A]$, this implies that there is only one $[AB]$ and one $[AC]$ margin, and thus the space of 3-way tables $[ABC]$ is the same as the space given these two margins. More specifically, $|\mathcal{F}_{\tau}|=|\mathcal{F}_{AB,AC}|=36$. %We can also solve the system of linear Diophantine equations in LattE and obtain the same result for the number of missing margins $[A]$. 
Our analysis shows that the results from Section~\ref{subsec:dag} hold, and we do get the same bounds and Markov bases as would if we only consider the marginal information.  A Markov basis for fixed $[B|A]$ and $[C|A]$ has 5 elements: 3 fix the missing $[A]$ margin, and 2 change it:
%Theorems~\ref{th:DAGMbasis} and~\ref{th:DAGmargin} 

\begin{footnotesize}
\[
 \left( {\begin{array}{*{20}c}
-4 &-2 &0 &-6 &0 &0 &0& 0& 3& 0& 0& 9\\
-2 &-1 &0 &-3 &2 &0 &0 &4 &0 &0 &0 &0 \\
0 &0 &0 &0 &-1 &1 &1 &-1 &0 &0 &0 &0 \\
0 &0 &0 &0 &0 &0 &0 &0 &-1 &1& 1 &-1 \\
-1 &1& 1 &-1 &0 &0& 0 &0 &0 &0 &0 &0 \\
\end{array}} \right).
\]
\end{footnotesize}

 Since in this example $[A]$ is unique, that would be like adding an additional constraint, and the actual minimal basis that describes our system of polynomial equations reduces to:
 
 \begin{footnotesize}
\[
 \left( {\begin{array}{*{20}c}
0 &0 &0& 0 &0 &0 &0 &0 &-1 &1 &1 &-1 \\
-1 &1 &1 &-1 &0 &0 &0 &0 &0 &0 &0 &0 \\
0 &0 &0 &0 &-1 &1 &1 &-1 &0 &0 &0 &0 \\
\end{array}} \right).
\]
\end{footnotesize}
We get the same Markov basis if we calculate it based on fixing $[AB]$ and $[AC]$ margins.

If $N=240,$ the Markov bases based on fixing $[B|A]$ and $[C|A]$ will be the same as with $N=24$; that is, they will have 5 elements shown above. However, now there are 361 possible $[A]$ margins consistent with both $[B|A]$ and $[C|A]$, and the Theorem~\ref{th:DAGmargin} and Corollary~\ref{th:DAGMbasis} and  are not satisfied, and the Markov basis will not reduce to the Markov basis given the  corresponding marginals. Furthermore, the space of tables given the conditional is significantly larger than the space of tables given the corresponding marginals: $|\mathcal{F}_{t}|=3066315\geq|\mathcal{F}_{AB,AC}|=13671$. Thus, the bounds on the cell entries are different, as is the support for the sampling distribution over the space of tables $[ABC]$. 

Similar analysis can be done for other arbitrary collections of conditionals and marginals. For example, $\mathcal{T}=\{P(B|A),P(A|C),P(C)\}$ will also satisfy the results from Section~\ref{subsec:dag}. If margin $[C]$ is missing, but it is unique based on the  solution to a linear Diophantine equation, we would again have a reduction of results; that is, the space $\mathcal{F}_t$  will be equivalent to the space $\mathcal{F}_{AB, AC}.$  For additional examples, see \url{http://www.stat.psu.edu/~sesa/cctable}.

%%%%%%%%%%%%%%%%%%%%%%%%%%%%%%%%%%%%%%%%%%
%%%%%%%%%%%%%%%%CONCLUSIONS %%%%%%%%%%%%%%%%%%%%%
%%%%%%%%%%%%%%%%%%%%%%%%%%%%%%%%%%%%%%%
\section{Conclusions}
\label{sec:conclusions}

We have used algebraic statistics to solve an open problem posed by \cite{dobra2008asa}. One area of this expanding field is concerned with 
the study and characterization of portions of the sample space and, in particular, of all datasets (i.e., tables) having the same observed margins and/or conditionals. 
In this paper,  we describe the space of all possible $r$-way contingency tables for a given sample size and set of observed (estimated) conditional frequencies.
This space of contingency tables can be decomposed according to different possible marginals, which, in turn, are encoded by the solution set to a linear Diophantine equation, giving the table space a special structure.  As a consequence, we obtain conditions under which two spaces of tables coincide: one is the space of tables for a given set of marginals, and the other is our space-- for a given sample size and set of conditionals. This characterization of the difference between two fibers has  thus provided a solution to an open problem in the literature.

In general, these fibers can be quite large. We provide formulas for computing the approximate and exact cardinality of the fibers in question, and we implemented those in R. The knowledge of the structure of the space of tables also enables us to enumerate all the possible data tables. This, in turn, leads to new cell bounds, some including connections to DAGs with combinations of conditionals and marginals.  In this paper, we assumed that the given sets of conditionals and marginals are compatible; for problems on compatibility for categorical and continuous variables, see \cite{arnold1999}; on compatibility of  full conditionals for discrete random variables, see \cite{slav:sull:2006}; and on generalization of compatibility of conditional probabilities in discrete cases, see \cite{morton2008}. Consistent with the literature on the characterizations of joint discrete distributions, we allow cell entries to be zero as long as we do not condition on an event of zero probability, and we assumed that the uniqueness theorems as stated in \cite{arnold1999} and \cite{slav:fien:2008}) hold. Then we considered if the given summary statistics are sufficient to uniquely identify the existing joint distribution, and if not, we proceed with the description of the related sample space.

Another application of the main observation, the table-space decomposition result, is that it allows us to describe the Markov bases given the conditionals. We observe that the moves consist of two sets: those that fix the margins, and those that change them. This result could lead to a simplified calculation of Markov bases in this particular setting. However, this remains to be studied more carefully. We raised a number of conjectures, and in particular we hope to prove Conjecture \ref{prop:conjecture}.

The properties of fibers, and, therefore, the results of this paper, are important in determining the support of sampling distributions, for conducting exact conditional inference, calculating cell bounds in contingency tables, and imputing missing cells in tables. The degree of Markov moves for given conditionals is arbitrary in the sense that it depends on the values of observed conditional probabilities, unless we use the observed cell counts directly. In practice, however, the conditional values are reported as real numbers.  Depending on the rounding point, the bounds, the moves and the fibers will differ from each of its kind. This has implications for statistical inference; in particular, in assessing ``true" disclosure risk in data privacy problems. The effect of rounding needs more careful investigation. This problem is related to characterizing when the integral approximation of the number of tables is correct up to rounding, and when the error is ''too large."

%%%%%%%%%%%%%APPENDIX %%%%%%%%%%%%%%%%
%%%%%%%%%%%%%%%%%%%%%%%%%%%%%%%%%%%%%
\appendix
\section{Proofs}
\label{sec:app-proofs}

\subsection{Proof of Theorem \ref{thm:diophantineSolutions=Marginals}}%  --modified 2Iv09.}
\begin{proof}
%   Recall that $s_{+j+}$ is ..... and that by allowing $C=X\backslash (A\cup B)$ ......    
    Assume %$\mathcal{D}$
    $\bf{n}$ is a table consistent with the given conditional $\{c_{ij}\}$ and grand total $N$.
    We can summarize the table using 
    $\bf{n^*}$ as described in the Introduction. 
    Thus  $\ds\frac{g_{ij}}{h_{ij}}=\ds\frac{s_{ij+}}{s_{+j+}}$.
    Since $g_{ij}$ and $h_{ij}$ are relatively prime, 
    it follows that 
    $s_{+j+}$ is an integer multiple of $h_{ij}$.  Furthermore, this is true for any $i$.
    By definition of $m_j$, $s_{+j+}$ is an integer multiple of $m_j$.% , the least common multiple of all $h_{ij}$'s with j fixed.
    In other words, 
    we can write $s_{+j+}$ as $m_{j}\cdot x_{j}$ where %with $x_{j}$ to be a positive integer. Now
    where $x_j$ is a positive integer.  Now  Equation (\ref{eq:diophantine}) is satisfied since by definition 
    $\sum\limits_{j}{s_{+j+}}=N$.
    Conversely, assume (\ref{eq:diophantine}) holds for the positive
    integers $x_{j}$'s. Then we construct $\bf{n}$ by letting
    $s_{ij+}$ to be $m_{j}\cdot x_{j}\cdot c_{ij}$. Then let $s_{ijk}$
    to be nonnegative integers according to the equation
    $s_{ij+}=\sum\limits_{k}{s_{ijk}}$. Then construct $\bf{n}$
    according to $\bf{n^*}$ in a similar way.
\end{proof}

\subsection{Proof of Lemma \ref{lm:structureOfSolnsOfDioph}}
\label{sec:proof-of-thm:structure}
In the following, let $(m_1,\dots,m_l)$ denote the greatest common divisor of $m_1,\dots,m_l$ for any arbitrary $1\leq l \leq J$. Notice that the standard Euclidean algorithm produces  integers $x_1^0,\dots,x_J^0$ such that $m_1x_1+\dots+m_lx_l=(m_1,\dots,m_J)$. 
Repeatedly using this process, we get $x_i$'s such that $\sum\limits_{i=1}^{J-j}{m_i x_i^{(j)}}=(m_1,m_2,...,m_{J-j})$ for any $j$. 
In particular, we can set 
$x_i=x_i^{(0)}\cdot\frac{N}{(m_1,...,m_J)}$ to obtain one of the  integer solutions of (\ref{eq:diophantine}). Note that this algorithm performs at most  $\sum\limits_{i=1}^{J}{m_i}$ calculations. 
Similarly, every solution of the Diophantine equation can be obtained by integers linear combinations, generalizing the two basic examples. 

\begin{proof}
    Elementary arguments allow us to express the vectors ${\bf v}_1,\dots,{\bf v_{J-1}}$    in terms of the coefficients $m_1$, $\dots$, $m_J$.
    By the Euclidean algorithm,  the \emph{gcd}'s $(m_1,\dots,m_l)$ can be expressed as a linear combination of the $m_j$'s: 
    \begin{align*}
        \sum\limits_{i=1}^{J-j}{m_ix_i^{(j)}}=(m_1,m_2,...,m_{k-j})
    \end{align*}
    for $j=1,2,...,J$.
    Then we can express all integer solutions of Equation (\ref{eq:diophantine}) as:
    \begin{align*}
    x_l=&x_l^{(0)}-\sum\limits_{h=1}^{J-l}{\ds\frac{m_{J+1-h}x_{l}^{(h)}}{(m_1,...,m_{J+1-h})}\cdot q_h}+
        \ds\frac{(m_1,...,m_{l-1})} {(m_1,...,m_l)}\cdot q_{J-l+1} \mbox{ for } l=2,\dots,J, \\
    x_1=&x_1^{(0)}-\sum\limits_{h=1}^{J-1}{\ds\frac{m_{J+1-h}x_{1}^{(h)}}{(m_1,...,m_{J+1-h})}\cdot q_h},
    \end{align*}
    where $q_i\in\mathbb Z$ for all $i$ with $1\leq i\leq J-1$.
    Then the vectors ${\bf v}_i$, for $i=1,\dots,J-1$, are determined from these expressions as follows:
    the $l^{th}$ coordinate of $v_i$ is the coefficient of $q_i$ in the expression for $x_l$.
\end{proof}

\subsection{Proof of Proposition \ref{prop:approxNumOfSolnsDiophantine-viaAlgebra}}
\label{sec:proof-of-prop:approxNumOfSolnsDiophantine-viaAlgebra}
\begin{proof}
    To approximate the number of nonnegative solutions, define a vector $u:=[m_1,m_2,...m_k]^T$, and a matrix
    $A:=[u,v_1,v_2,...,v_{k-1}]$.  Recall that vectors $v_1, ..., v_k$ come from Lemma~\ref{lm:structureOfSolnsOfDioph}. From the expressions above, we see that
    \begin{align*}
        A=\left[ {\begin{array}{*{20}c}
          {m_1 } & {\frac{{m_k x_1 ^{(1)} }}{{(m_1 ,\dots,m_k )}}} & {\frac{{m_{k - 1} x_1 ^{(2)} }}{{(m_1 ,\dots,m_{k - 1} )}}} & {\dots} & {\dots} &
        {\frac{{m_2 x_1 ^{(k - 1)} }}{{(m_1 ,m_2 )}}}  \\
           {m_2 } & {\frac{{m_k x_2 ^{(1)} }}{{(m_1 ,\dots,m_k )}}} & {\frac{{m_{k - 1} x_2 ^{(2)} }}{{(m_1 ,\dots,m_{k - 1} )}}} & {\dots} & {\dots} &
        {\frac{{ - m_1 }}{{(m_1 ,m_2 )}}}  \\
           {m_3 } & {\frac{{m_k x_3 ^{(1)} }}{{(m_1 ,\dots,m_k )}}} & {\frac{{m_{k - 1} x_3 ^{(2)} }}{{(m_1 ,\dots,m_{k - 1} )}}} & {\dots} &
        {\frac{{ - (m_1 ,m_2 )}}{{(m_1 ,m_2 ,m_3 )}}} & 0  \\
          {\vdots} & {\vdots} & {\vdots} & {\vdots} & {\vdots} & {\vdots}  \\
           {m_{k - 1} } & {\frac{{m_k x_{k - 1} ^{(1)} }}{{(m_1 ,\dots,m_k )}}} & {\frac{{ - (m_1 ,\dots,m_{k - 2} )}}{{(m_1 ,\dots,m_{k - 1} )}}} & 0 &
        {\dots} & 0  \\
           {m_k } & {\frac{{ - (m_1 ,\dots,m_{k - 1} )}}{{(m_1 ,\dotsm_k )}}} & 0 & 0 & {\dots} & 0  \\
        \end{array}} \right]
    \end{align*}
    
    One readily checks that $u$ is orthogonal to any column $v_i$.  Thus the absolute value of $\ds(\det{A})/{||u||}$
        is the (k-1)-dimensional volume of the parallelotope spanned by $v_1,v_2,...,v_{k-1}$. Let's compute this value:
        
        \begin{scriptsize}
    \begin{align*}
    \frac{{\det A}}{{||u||}} &= \frac{1}{{\sqrt {m_1 ^2  + m_2 ^2  + ...
    + m_k ^2 } }} \cdot \det A \\
    %\]
    %\[
    &=\frac{1}{{\sqrt {m_1 ^2  +\dots + m_k ^2 } }}
    \cdot \det\left[ {\begin{array}{*{20}c}
       {m_1 } & {\frac{{m_k x_1 ^{(1)} }}{{(m_1 ,\dots,m_k )}}} & {\frac{{m_{k - 1} x_1 ^{(2)} }}{{(m_1 ,\dots,m_{k - 1} )}}} & {\dots} & {\dots} &
    {\frac{{m_2 x_1 ^{(k - 1)} }}{{(m_1 ,m_2 )}}}  \\
       {m_2 } & {\frac{{m_k x_2 ^{(1)} }}{{(m_1 ,\dots,m_k )}}} & {\frac{{m_{k - 1} x_2 ^{(2)} }}{{(m_1 ,\dots,m_{k - 1} )}}} & {\dots} & {\dots} &
    {\frac{{ - m_1 }}{{(m_1 ,m_2 )}}}  \\
       {m_3 } & {\frac{{m_k x_3 ^{(1)} }}{{(m_1 ,\dots,m_k )}}} & {\frac{{m_{k - 1} x_3 ^{(2)} }}{{(m_1 ,\dots,m_{k - 1} )}}} & {\dots} &
    {\frac{{ - (m_1 ,m_2 )}}{{(m_1 ,m_2 ,m_3 )}}} & 0  \\
       {\vdots} & {\vdots} & {\vdots} & {\vdots} & {\vdots} & {\vdots}  \\
       {m_{k - 1} } & {\frac{{m_k x_{k - 1} ^{(1)} }}{{(m_1 ,\dots,m_k )}}} & {\frac{{ - (m_1 ,\dots,m_{k - 2} )}}{{(m_1 ,\dots,m_{k - 1} )}}} & 0 &
    {\dots} & 0  \\
       {m_k } & {\frac{{ - (m_1 ,\dots,m_{k - 1} )}}{{(m_1 ,\dots,m_k )}}} & 0 & 0 & {\dots} & 0  \\
    \end{array}} \right] \\
    %\]
    %\[
        &=\frac{1}{m_1{\sqrt {m_1 ^2  +\dots + m_k ^2 } }}
        \cdot\det\left[ {\begin{array}{*{20}c}
           {\sum\limits_{i=1}^{k}{m_i^2} } & 0 & 0 & {\dots} & {\dots} & 0  \\
       {m_2 } & {\frac{{m_k x_2 ^{(1)} }}{{(m_1 ,\dots,m_k )}}} & {\frac{{m_{k - 1} x_2 ^{(2)} }}{{(m_1 ,\dots,m_{k - 1} )}}} & {\dots} & {\dots} &
    {\frac{{ - m_1 }}{{(m_1 ,m_2 )}}}  \\
       {m_3 } & {\frac{{m_k x_3 ^{(1)} }}{{(m_1 ,\dots,m_k )}}} & {\frac{{m_{k - 1} x_3 ^{(2)} }}{{(m_1 ,\dots,m_{k - 1} )}}} & {\dots} &
    {\frac{{ - (m_1 ,m_2 )}}{{(m_1 ,m_2 ,m_3 )}}} & 0  \\
       {\vdots} & {\vdots} & {\vdots} & {\vdots} & {\vdots} & {\vdots}  \\
       {m_{k - 1} } & {\frac{{m_k x_{k - 1} ^{(1)} }}{{(m_1 ,\dots,m_k )}}} & {\frac{{ - (m_1 ,\dots,m_{k - 2} )}}{{(m_1 ,\dots,m_{k - 1} )}}} & 0 &
    {\dots} & 0  \\
       {m_k } & {\frac{{ - (m_1 ,\dots,m_{k - 1} )}}{{(m_1 ,\dots,m_k )}}} & 0 & 0 & {\dots} & 0  \\
    \end{array}} \right] \\
    %\]
    %\[
    &=\frac{{( - 1)^{k - 1} \sqrt {m_1 ^2   + \dots + m_k ^2 }
    }}{{(m_1,m_2, \dots ,m_k )}} .
    %\]
    \end{align*}
    \end{scriptsize}
    
    Thus the volume of the parallelotope spanned by $v_1,v_2,...,v_{k-1}$ is  
    \[
        \frac{{ \sqrt {m_1 ^2  + m_2 ^2  + ... + m_k ^2 } }}{{(m_1,m_2, ...,m_k )}}.
    \]

    Next, define
    $$G=\{(x_1,...,x_k)^T|m_1x_1+m_2x_2...+m_kx_k=N,x_1\geq0,x_2\geq0,...,x_k\geq0\}.$$
    LetÕs refer to $G$ as the \emph{marginal polytope}. The volume of $G$ is easily calculated to be    \[
        \frac{N^{k-1}}{(k-1)!(m_1\cdot m_2\cdot \dots \cdot m_k)}\sqrt{m_1^2+m_2^2+...+m_k^2}
    \]

    The approximation to the number of lattice points in $G$, that is, the number of positive integer solutions of (\ref{eq:diophantine})
    is obtained by dividing the volume of $G$ by the volume of the parallelotope above.
 This proves the first claim.
   
   For the second claim, let    $\mathcal{P}$ be the projection of the set of positive integer solutions onto
    the $x_1\dots x_{j-1}$ -plane. Then there are exactly 
    \[
        \sum_{x_1x_2\dots x_{j-1}\in\mathcal{P}}1
    \]
 positive integer solutions of the Diophantine equation (\ref{eq:diophantine}). 
 %   Recall that $\mathcal{P}$ is part of a lattice. 
  Let $\mathfrak a$ be the area of the unit cell of the lattice spanned by $\mathcal P$.
 % By definition, then, we have
 Then
  \[
 \int\limits_{(x_1,...,x_{j- 1} ) \in \mathcal{M}} 1dx_1\dots dx_j  \approx   \mathfrak a  \cdot \sum_{x_1x_2\dots x_{j-1}\in\mathcal{P}}1,
  \]
where the right hand side is, by definition, the Riemann sum approximation of the integral. 
In particular, one easily concludes that the error of this approximation is given by the difference in the volume of the polytope $\mathcal M$ and the volume of the polyhedron which is the union of all the unit cells anchored at the lattice points $\mathcal P$. 
%    Therefore, the sum we are interested in is approximated by this integral, divided by the area of the unit cell.

To complete the proof, we calculate the area of the unit cell $\mathfrak a$. 
Let   $\mathcal L$ be the lattice of all integer solutions to Equation (\ref{eq:diophantine}).    Since $\mathcal P\subseteq \mathcal L\cup \{x_j=0\}$, 
    we can choose its unit cell to be the projection of the unit cell of $\mathcal L$ onto $\{x_j=0\}$.
This projection, in turn, 
is a
    parallelopiped whose $(j-1)$-dimensional
    volume is the absolute value of
    $$
    \det\left[ {\begin{array}{*{20}c}
       {\frac{{m_j x_2 ^{(1)} }}{{gcd(m_1 ,...,m_j )}}} & {\frac{{m_{j - 1} x_2 ^{(2)} }}{{gcd(m_1 ,...,m_{j - 1} )}}} & {...} & {...} & {\frac{{ -
    m_1 }}{{gcd(m_1 ,m_2 )}}}  \\
      {\frac{{m_j x_3 ^{(1)} }}{{gcd(m_1 ,...,m_j )}}} & {\frac{{m_{j - 1} x_3 ^{(2)} }}{{gcd(m_1 ,...,m_{j - 1} )}}} & {...} & {\frac{{ -
    gcd(m_1 ,m_2 )}}{{gcd(m_1 ,m_2 ,m_3 )}}} & 0  \\
      {...} & {...} & {...} & {...} & {...}  \\
        {\frac{{m_j x_{k - 1} ^{(1)} }}{{gcd(m_1 ,...,m_j )}}} & {\frac{{ - gcd(m_1 ,...,m_{j - 2} )}}{{gcd(m_1 ,...,m_{j- 1} )}}} & 0 & {...} & 0  \\
      {\frac{{ - gcd(m_1 ,...,m_{j - 1} )}}{{gcd(m_1 ,...,m_j )}}} & 0 & 0 & {...} & 0  \\
    \end{array}} \right]
    $$
    which is $\ds\frac{m_1}{gcd(m_1,m_2,...,m_j)}$.
\end{proof}

\section{Code \& Examples}
\label{sec:app-code}

\begin{example}
    Let us consider a bivariate ($J=2$) Diophantine equation
    \begin{align}%\tag{*}
    \label{eq:dio-xy}
    ax + by =N,
    \end{align}
    where $a:=m_1$, $b:=m_2$, and $N$ are positive integers. Note that we have renamed the variables $x:=x_1$ and $y:=x_2$
    for simplicity of notation.

Let $L$ be  the line defined by \eqref{eq:dio-xy} for $(x,y)\in\mathbb R^2$.
  We are only interested in the set of \emph{nonnegative integer} solutions to (\ref{eq:dio-xy}), that is, \emph{nonnegative lattice points} $L\cap \mathbb Z^2_{\geq 0}$ on the line $L$. %%% that are contained in the positive quadrant  $\{(x,y)\in\mathbb R^2_{\geq 0}\}$. 
%The  integer solutions $L\cap \mathbb Z^2$   of \eqref{eq:dio-xy} are called the \emph{lattice points}  on the line $L$. 
%  Note that we are only interested in the set of \emph{nonnegative} integral solutions to (\ref{eq:dio-xy}), that is, lattice points on the line that are contained in the positive quadrant  $\{(x,y)\in\mathbb R^2_{\geq 0}\}$. 
 %% SO I CUT THIS SOME MORE AND CREATE A SEPARATE PAGE TO PUT ONLINE 
%There is a standard algebraic way to understand these lattice points, which we review here for the convenience of the reader. 
 %   For $x,y \in \mathbb Z$ the expression $ax+by$ is, by definition, an element of the \emph{ideal} generated by $a$ and $b$, 
  %   denoted by $(a,b)$ as usual.
   % If $ax+by=N$ for some integer $N$, then this means that $N$ is an element of the ideal $(a,b)$.
    Every ideal in $\mathbb Z$ can be generated by one element; in our case, this element is the
    \emph{greatest common divisor} of $a$ and $b$, which we will denote by $gcd(a,b)$.
    %Then, $N$ being an element of the ideal $(a,b)$ is equivalent to $N$ being a multiple of $gcd(a,b)$.
    In particular, it follows that the equation (\ref{eq:dio-xy}) has integer solutions \emph{if and only if} $gcd(a,b)$ divides $N$.
    In addition, the description of \emph{all} integral solutions readily follows by elementary algebra.
    Namely, suppose that $(x_0,y_0)\in\mathbb Z^2$ is one integer solution of $ax +by =N$.  Then all other integer solutions are given by the following equation where $q$ is an arbitrary integer:
   % \[  %\tag{**}
    \begin{align}
    \label{eq:dio-xy-solnformula}
    \left\{ \begin{array}{l}
    x = x_0  + \frac{b}{{gcd(a,b)}} \cdot q \\
    y = y_0  - \frac{a}{{gcd(a,b)}} \cdot q  \\
    \end{array}  \right.
   \end{align}% \]

    In fact, we can also estimate the {number of solutions} of (\ref{eq:dio-xy}).
    The geometry of the line provides that $x\in [0,N/a]$.  From (\ref{eq:dio-xy-solnformula}), it follows that $x$ varies by multiples of $b/gcd(a,b)$.
    Therefore, there are at most
    \begin{align*}
        \frac{N/a}{b/gcd(a,b)}=\frac{N \cdot gcd(a,b)}{ab}
    \end{align*}
    points in $L \cap \mathbb Z^2_{\geq 0}$.  Note that this is only an estimate, albeit a good one, since we are essentially counting only
    $\{x : ax+by=N \mbox{ for some $y$}\} \cap \mathbb Z$.
\end{example}

The code used for the analysis in this paper and additional examples are available at \url{http://www.stat.psu.edu/~sesa/cctable}

The examples suggest that, in general, we are interested in the lattice points of the polytope obtained by intersecting the hyperplane defined by Equation \eqref{eq:diophantine} and the positive orthant.

The code includes:
\begin{itemize}
\item A collection of functions we wrote in R for enumerating and counting the number of missing marginal and $k$-way tables given the partial information described in the paper. There are functions for  (1) finding the greatest common divisor, (2) solving Diophantine equations, and (3) counting the number of tables. 
\item A sample R and LattE code for the examples in this paper, and some additional related examples. 
\item An additional example in support of Lemma~\ref{lm:structureOfSolnsOfDioph}. 
\end{itemize}

\bibstyle{abbrvnat}
%\bibstyle{imsart-nameyear}
\bibliography{algstat}

\end{document}